\newcommand{\set}[1]{\left\{#1\right\}}                     
\newcommand{\abs}[1]{\left|#1\right|}                       
\newcommand{\bra}[1]{\left(#1\right)}  
\newcommand{\sidx}[1]{\left\llbracket     #1 \right\rrbracket}
\definecolor{mathset}{HTML}{CFD9E8}
\definecolor{mathboundary}{HTML}{5E81B5}
\tikzstyle{notactive}=[draw=mathboundary,thick, fill=mathset]
\tikzstyle{notactiveinner}=[draw=mathboundary,thick,fill=white]
\DeclareMathOperator{\conv}{conv}
\DeclareMathOperator{\cconv}{\overline{conv}}
\DeclareMathOperator{\relint}{relint}
\DeclareMathOperator{\aff}{aff}
\DeclareMathOperator{\Int}{int}
\DeclareMathOperator{\dom}{dom}
\DeclareMathOperator{\proj}{proj}
\begin{document}
\mainmatter              
\title{Mixed-integer convex representability}
\titlerunning{Mixed-integer convex representability}  
%
\author{Miles Lubin$^*$ \and Ilias Zadik$^*$ \and Juan Pablo Vielma$^\dagger$}
\authorrunning{Lubin, Zadik, and Vielma} 
\institute{Massachusetts Institute of Technology, Cambridge, MA USA}

\maketitle              

\begin{abstract}
We consider the question of which nonconvex sets can be represented exactly as the feasible sets of mixed-integer convex optimization problems. We state the first complete characterization for the case when the number of possible integer assignments is finite. We develop a characterization for the more general case of unbounded integer variables together with a simple necessary condition for representability which we use to prove the first known negative results. Finally, we study representability of subsets of the natural numbers, developing insight towards a more complete understanding of what modeling power can be gained by using convex sets instead of polyhedral sets; the latter case has been completely characterized in the context of mixed-integer linear optimization.
\end{abstract}
\section{Introduction}
\makeatletter{\renewcommand*{\@makefnmark}{}
\footnotetext{$^*$ These authors contributed equally to this work.\\$^\dagger$ Supported by NSF under grant CMMI-1351619.\\We acknowledge the anonymous referees for improving the presentation of this work. }
\makeatother}

Early advances in solution techniques for mixed-integer linear programming (MILP) motivated studies by Jeroslow and Lowe~\cite{springerlink:10.1007/BFb0121015} and others (recently reviewed in~\cite{VielmaSurvey}) on understanding precisely which sets can be encoded as projections of points within a closed polyhedron satisfying integrality restrictions on a subset of the variables. These sets can serve as feasible sets in mixed-integer linear optimization problems and therefore potentially be optimized over in practice by using branch-and-bound techniques (ignoring issues of computational complexity). Jeroslow and Lowe, for example, proved that the epigraph of the piecewise linear function $f(x)$ which equals 1 if $x > 0$ and 0 if $x = 0$, is not representable over the domain $x \ge 0$. Such a function would naturally be used to model a fixed cost in production. It is now well known that an upper bound on $x$ is required in order to encode such fixed costs in an MILP formulation.

Motivated by recent developments in methods for solving mixed-integer convex programming (MICP) problems~\cite{BonamiReview,IPCO2016}, in this work we address the analogous question of which \textit{nonconvex} sets may be represented as projections of points within a \textit{convex} set satisfying integrality restrictions on a subset of the variables.   To our knowledge, we are the first authors to consider this general case. Related but more specific analysis has been developed by Del Pia and Poskin~\cite{del2016ellipsoidal} where they characterized the case where the convex set is an intersection of a polyhedron with an ellipsoidal region and by Dey and Mor{\'a}n~\cite{dey2013some} where they studied the structure of integer points within convex sets but without allowing a mix of continuous and discrete variables.

After a brief study in Section~\ref{sec:bounded} of restricted cases, e.g., when there is a finite number of possible integer assignments, we focus primarily on the more challenging general case where we seek to understand the structure of countably infinite unions of slices of convex sets induced by mixed-integer constraints. In Section~\ref{sec:family} we develop a general, yet hard to verify, characterization of representable sets as families of convex sets with specific properties, and in Section~\ref{sec:midpoint} we prove a much simpler necessary condition for representability which enables us to state a number of nonrepresentability results. Using that condition, we prove, for example, that the set of $m \times n$ matrices with rank at most 1 is not representable when $m,n \ge 2$. In Section~\ref{sec:naturals} we conclude with an in-depth study of the representability of subsets of the natural numbers. The special case of the natural numbers is a sufficiently challenging first step towards a general understanding of the structure of representable sets. We prove, for example, that the set of prime numbers is not representable, an interesting case that separates mixed-integer convex representability from mixed-integer polynomial representability~\cite{diophantine}. By adding rationality restrictions to the convex set in the MICP formulation, we completely characterize representability of subsets of natural numbers, discovering that one can represent little beyond what can be represented by using rational polyhedra.

\section{Preliminaries}

We use the notation $\sidx{k}$ to denote the set $\{1,2,\ldots,k\}$. Also by $\mathbb{N}$ we will refer to the nonnegative integers $\{0,1,2,\ldots\}$. We will often work with projections of a set $M\subseteq \mathbb{R}^{n+p+d}$ for some $n,p,d \in \mathbb{N}$. We identify the variables in $\mathbb{R}^n$, $\mathbb{R}^p$ and $\mathbb{R}^d$ of this set as $x$, $y$ and $z$ and we let 
\[\proj_x\bra{M}=\set{x\in \mathbb{R}^n\,:\,\exists \bra{y,z}\in \mathbb{R}^{p+d} \text{ s.t. } \bra{x,y,z}\in M}. \]
We similarly define $\proj_y\bra{M}$ and $\proj_z\bra{M}$.
\begin{definition}
   Let $M\subseteq \mathbb{R}^{n+p+d}$ be a closed, convex set and $S \subseteq \mathbb{R}^n$. We say $M$ induces an MICP formulation of $S$ if and only if 
   \begin{equation}\label{MICPdef}
   S=\proj_x\bra{M\cap \bra{\mathbb{R}^{n+p} \times \mathbb{Z}^d}}.
   \end{equation}

   A set $S \subseteq \mathbb{R}^n$ is \textbf{MICP representable} if and only if there exists an MICP formulation of $S$. If such formulation exists for a closed polyhedron $M$ then we say $S$ is (additionally) MILP representable. 
   \end{definition}
   
   \begin{definition}
    A set $S$ is \textbf{bounded MICP (MILP)} representable if there exists an MICP (MILP) formulation which satisfies $\abs{\proj_z\bra{M\cap \bra{\mathbb{R}^{n+p} \times \mathbb{Z}^d}}}<\infty$.
   That is, there are only finitely many feasible assignments of the integer variables $z$. 
   \end{definition}

   \begin{definition}
       For a set of integral vectors $z_1, z_2, \ldots, z_k \in \mathbb{Z}^d$ we define the integral cone $
       \operatorname{intcone}(z_1,z_2,\ldots,z_k):=\{ \sum_{i=1}^{k} \lambda_i z_i \big{|} \lambda_i \in \mathbb{N}, i \in \sidx{k}\}$.
   \end{definition}

\subsection{Bounded and other restricted MICP representability results}\label{sec:bounded}

It is easy to see that bounded MICP formulations can represent \textit{at most} a finite union of projections of closed, convex sets. To date, however, there are no precise necessary conditions over these sets for the existence of a bounded MICP formulation. For instance, Ceria and Soares~\cite{CeriaSoares} provide an MICP formulation for the finite union of closed, convex sets under the condition that the sets have the same \textit{recession cone} (set of unbounded directions). In the following proposition we close this gap and give a simple, explicit formulation for any finite union of projections of closed, convex sets without assumptions on recession directions.

\begin{proposition}\label{balasform}
$S\subseteq  \mathbb{R}^{n}$ is bounded MICP representable if and only if there exist nonempty, closed, convex sets $T_1, T_2, \ldots, T_k \subset \mathbb{R}^{n+p}$ for some $p,k \in \mathbb{N}$  such that $S=\bigcup_{i=1}^k \operatorname{proj}_{x} T_i$. In particular a formulation for such $S$ is given by 
\begin{subequations}\label{form:extendedbounded}
\begin{align}
x = \sum\nolimits_{i=1}^k x_i,\quad (x_i,y_i,z_i) \in \hat T_i \; \forall  i\in \sidx{k},\quad \sum\nolimits_{i=1}^k z_i = 1,\; z \in \{0,1\}^k,\\||x_i||_2^2 \le z_i t,\quad\; \forall i\in \sidx{k},  t \ge 0
\end{align}
\end{subequations}
where $\hat T_i$ is the closed conic hull of $T_i$, i.e., the closure of $\{ (x,y,z) : (x,y)/z \in T_i, z > 0 \}$. 
\end{proposition}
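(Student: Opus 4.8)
The plan is to prove the two implications separately; the reverse direction reduces to verifying that the displayed formulation has the claimed properties. For the forward implication, suppose $M\subseteq\mathbb{R}^{n+p+d}$ is closed and convex and induces a bounded MICP formulation of $S$. Boundedness means that $Z:=\proj_z\bra{M\cap\bra{\mathbb{R}^{n+p}\times\mathbb{Z}^d}}$ is finite, say $Z=\set{\hat z_1,\ldots,\hat z_k}$; set $T_i:=\set{(x,y)\in\mathbb{R}^{n+p}\,:\,(x,y,\hat z_i)\in M}$ for $i\in\sidx{k}$. Each $T_i$ is the preimage of the closed convex set $M$ under an affine map, hence closed and convex, and nonempty precisely because $\hat z_i\in Z$. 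Since $M\cap\bra{\mathbb{R}^{n+p}\times\mathbb{Z}^d}$ is the union of the slices $M\cap\bra{\mathbb{R}^{n+p}\times\set{\hat z_i}}$ over $i\in\sidx{k}$, projecting onto $x$ gives $S=\bigcup_{i=1}^k\proj_x T_i$ (with $k=0$ and $S=\emptyset$ in the degenerate case $Z=\emptyset$).

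For the reverse implication I would take the given nonempty, closed, convex sets $T_i$ and let $M$ be the set cut out by the constraints \eqref{form:extendedbounded} in the variables $x$, $\bra{x_i,y_i}_{i\in\sidx{k}}$, $t$, and the integer vector $z=\bra{z_i}_{i\in\sidx{k}}$. Three claims need checking. First, $M$ is closed and convex: every constraint other than $(x_i,y_i,z_i)\in\hat T_i$ and $\|x_i\|_2^2\le z_i t$ is affine; the closed conic hull of a nonempty closed convex set is a closed convex cone contained in $\set{z_i\ge 0}$, so the former constraints are convex, closed, and already imply $z_i\ge 0$; and the set $\set{(x_i,z_i,t)\,:\,\|x_i\|_2^2\le z_i t,\ z_i\ge 0,\ t\ge 0}$ is a (closed, convex) rotated second-order cone, intersected here with the already-present constraints $z_i\ge 0$ and $t\ge 0$. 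Second, the integer part is finite: $z\in\set{0,1}^k$ together with $\sum_i z_i=1$ forces $z\in\set{e_1,\ldots,e_k}$, so at most $k$ integer assignments are feasible. Third, the projection onto $x$ of the mixed-integer-feasible points of \eqref{form:extendedbounded} equals $\bigcup_{i=1}^k\proj_x T_i=S$.

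For the third claim, the inclusion $\supseteq$ is by lifting: given $\bar x\in\proj_x T_j$, pick $\bar y$ with $(\bar x,\bar y)\in T_j$, set $z=e_j$ and $(x_j,y_j)=(\bar x,\bar y)$ so that $(x_j,y_j,z_j)=(\bar x,\bar y,1)\in\hat T_j$, set $(x_i,y_i)=0$ for $i\ne j$ (legitimate since $0\in\hat T_i$ because $\hat T_i$ is a nonempty cone), and choose $t$ large enough that $\|\bar x\|_2^2\le t$; then $x=\sum_i x_i=\bar x$ and all constraints hold. For the inclusion $\subseteq$, a mixed-integer-feasible point has $z=e_j$ for some $j$; for each $i\ne j$ the constraint $\|x_i\|_2^2\le z_i t=0$ forces $x_i=0$, so $x=x_j$; and $(x_j,y_j,1)\in\hat T_j$ together with closedness of $T_j$ gives $(x_j,y_j)\in T_j$ (a short argument shows the slice of the closed conic hull at $z_j=1$ is exactly $T_j$, since $w^{(m)}/z^{(m)}\to w$ whenever $z^{(m)}\to 1$), whence $x=x_j\in\proj_x T_j\subseteq S$.

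The step I expect to be the main obstacle, and the reason a plain Balas/disjunctive formulation does not suffice here, is obtaining correctness without any assumption relating the recession cones of the $T_i$: the closure in the definition of $\hat T_i$ places $(x_i,y_i,0)$ into $\hat T_i$ for every $(x_i,y_i)$ in the recession cone of $T_i$, so $x=\sum_i x_i$ could otherwise pick up spurious recession directions from the inactive disjuncts $i$ with $z_i=0$. The constraint $\|x_i\|_2^2\le z_i t$ is precisely what forces $x_i=0$ when $z_i=0$ while imposing nothing when $z_i=1$ (any $t\ge\|x_i\|_2^2$ works); checking that this interaction, together with the routine homogenization facts about $\hat T_i$ used above, makes the formulation exact is the heart of the argument.
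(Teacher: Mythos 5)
Your proof is correct and follows essentially the same approach as the paper: the forward direction is the routine slicing of $M$ at the finitely many feasible integer points, and the reverse direction verifies the given formulation exactly as the paper does, with the rotated second-order-cone constraint $\|x_i\|_2^2 \le z_i t$ forcing $x_i = 0$ on inactive disjuncts while imposing nothing when $z_i = 1$. Your added details (that the slice of $\hat T_i$ at $z_i=1$ equals $T_i$ by closedness of $T_i$, and that convexity of the rotated cone requires the implied sign constraints) are points the paper asserts without elaboration, so you have if anything been slightly more careful.
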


Known MICP representability results for unbounded integers are more limited. For the case in which $M$ is a rational polyhedron Jerowslow and Lowe \cite{springerlink:10.1007/BFb0121015} showed that a set $S\subseteq  \mathbb{R}^{n}$ is (unbounded) rational MILP representable if and only if there exist $r_1,r_2,\ldots,r_t \subseteq \mathbb{Z}^n$ and rational polytopes $S^i$ for $i\in \sidx{k}$ such that 
\begin{equation}\label{milprep}
S= \bigcup\nolimits_{i=1}^k S^{i} + \operatorname{intcone}(r_1,r_2,\ldots,r_t).
\end{equation}
Characterization \eqref{milprep} does not hold in general for non-polyhedral $M$. However, using results from  \cite{dey2013some} it is possible to show that it holds for some pure integer cases as well. For instance, Theorem 6 in \cite{dey2013some} can be used to show that for any $\alpha>0$, $S_{\alpha}:=\set{x\in \mathbb{Z}^2\,:\, x_1 x_2 \geq \alpha}$ satisfies \eqref{milprep} with $S^i$ containing a single integer vector for each $i\in \sidx{k}$.

The only mixed-integer and non-polyhedral result we are aware of is a characterization of the form \eqref{milprep} when $M$ is the intersection of a rational polyhedron with an  ellipsoidal cylinder having a rational recession cone \cite{del2016ellipsoidal}. An identical proof also holds when the recession cone of $M$ is a rational subspace and $M$ is contained in a rational polyhedron with the same recession cone as $M$. We can further extend this result to the following simple proposition whose proof is in the appendix.

\begin{proposition}\label{boundedprop}
If $M$ induces an MICP-formulation of $S$ and $M=C+K$ where $C$ is a compact convex set and $K$ is a rational polyhedral cone, then $S$ satisfies representation \eqref{milprep} with $S^{i}$ now being compact convex sets for each $i\in \sidx{k}$.
    \end{proposition}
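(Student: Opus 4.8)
The plan is to exploit finite generation of the recession cone $K$ together with compactness of $C$, since $M$ itself, unlike a polyhedron, is not finitely generated and so one cannot argue directly with generators of $M$. Although $M$ is not polyhedral, the hypothesis $M=C+K$ gives $M+K=M$ (because $K+K=K$), and, $K$ being a rational polyhedral cone, it is generated as a cone by finitely many integral vectors $g_1,\dots,g_q\in\mathbb{Z}^{n+p+d}$ (Minkowski--Weyl, then clear denominators). I would set $r_l:=\proj_x(g_l)\in\mathbb{Z}^n$ for $l\in\sidx{q}$ and prove $S=\bigcup_{j}S^j+\operatorname{intcone}(r_1,\dots,r_q)$ for a finite collection of compact convex sets $S^j$. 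The inclusion ``$\supseteq$'' is immediate: if $(x,y,z)\in M$ with $z\in\mathbb{Z}^d$, then $(x,y,z)+g_l\in M+K=M$ still has integral $z$-component, so $x+r_l\in S$; iterating gives $S+\operatorname{intcone}(r_1,\dots,r_q)\subseteq S$, and the $S^j$ will be constructed inside $M$.

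For the base pieces, put $B:=\sum_{l=1}^q\|g_l\|_2$ and $\hat C:=C+\bra{K\cap\set{v:\|v\|_2\le B}}$. This is a Minkowski sum of two compact convex sets, hence compact and convex, and $\hat C\subseteq C+K=M$. Since $\hat C$ is bounded, $\proj_z\bra{\hat C}$ contains only finitely many integer points $\zeta_1,\dots,\zeta_m\in\mathbb{Z}^d$; for each $j\in\sidx{m}$ I would take $S^j:=\proj_x\bra{\hat C\cap\bra{\mathbb{R}^{n+p}\times\set{\zeta_j}}}$, which is a compact convex set as the $x$-projection of a compact convex set, and $S^j\subseteq\proj_x\bra{M\cap\bra{\mathbb{R}^{n+p}\times\mathbb{Z}^d}}=S$. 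Combined with the previous paragraph this gives $\bigcup_{j=1}^mS^j+\operatorname{intcone}(r_1,\dots,r_q)\subseteq S$ (and if $M\cap(\mathbb{R}^{n+p}\times\mathbb{Z}^d)=\emptyset$ the whole statement is trivial, so assume it nonempty).

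The substantive direction, and the step I expect to be the main obstacle, is $S\subseteq\bigcup_{j=1}^mS^j+\operatorname{intcone}(r_1,\dots,r_q)$: the claim that finitely many compact slices of the bounded set $\hat C$, shifted by $\operatorname{intcone}(r_1,\dots,r_q)$, already recover all of $S$. Given $x^*\in S$, choose $(y^*,z^*)$ with $(x^*,y^*,z^*)\in M$ and $z^*\in\mathbb{Z}^d$, and write $(x^*,y^*,z^*)=c+k$ with $c\in C$, $k\in K$. Writing $k=\sum_{l=1}^q\mu_lg_l$ with $\mu_l\ge0$, I would peel off $k'':=\sum_l\lfloor\mu_l\rfloor g_l\in K\cap\mathbb{Z}^{n+p+d}$; the remainder $k':=k-k''=\sum_l(\mu_l-\lfloor\mu_l\rfloor)g_l$ has each coefficient in $[0,1)$, hence $\|k'\|_2\le\sum_l\|g_l\|_2=B$, so $p^*:=(x^*,y^*,z^*)-k''=c+k'\in\hat C$. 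Moreover $\proj_z\bra{p^*}=z^*-\proj_z\bra{k''}\in\mathbb{Z}^d$, so $\proj_z\bra{p^*}=\zeta_j$ for some $j$ and $\proj_x\bra{p^*}\in S^j$, while $\proj_x\bra{k''}=\sum_l\lfloor\mu_l\rfloor r_l\in\operatorname{intcone}(r_1,\dots,r_q)$; adding the two yields $x^*\in S^j+\operatorname{intcone}(r_1,\dots,r_q)$, which establishes representation~\eqref{milprep} with the $S^j$ compact convex. The one place genuine care is needed is this last decomposition: the subtracted recession direction $k''$ must be \emph{integral}, so that the $z$-coordinate of $p^*$ stays in $\mathbb{Z}^d$, yet the remainder $k'$ must be bounded \emph{uniformly over all of $K$}, so that $p^*$ falls into the single fixed compact set $\hat C$. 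It is precisely the rationality and polyhedrality of $K$ that deliver both at once; absent such structure on the recession cone --- as in the general case --- one should not expect finitely many compact $S^j$ to suffice.
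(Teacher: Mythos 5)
Your proof is correct and follows essentially the same route as the paper's: write each mixed-integer point of $M=C+K$ as a point of a fixed compact set (compact base plus the bounded ``fractional'' part of the conic component) plus an integral combination of the integer generators of $K$, then take the finitely many integer $z$-slices of that compact set as the $S^i$. The only cosmetic difference is that you bound the remainder by a norm ball intersected with $K$ where the paper uses the zonotope $\set{\sum_j \gamma_j g_j : 0\le\gamma\le 1}$; both yield the same conclusion.
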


Unfortunately, MICP-representable sets in general may not have a representation of the form  \eqref{milprep}, even when $S^i$ is allowed to be any convex set. We illustrate this with a simple variation on the pure-integer example above.
\begin{example}\label{ex:hyperbola}
Let $S:=\set{x\in \mathbb{N}\times \mathbb{R}\,:\, x_1 x_2 \geq 1}$ be the set depicted in Figure \ref{fig:pictures}. For each $z\in \mathbb{N}, z \ne 0$ let $A_z:=\set{x\in \mathbb{R}^2\,:\, x_1=z,\;x_2\geq 1/z}$ so that $S=\bigcup_{z=1}^\infty A_z$. Suppose for contradiction that $S$ satisfies  \eqref{milprep} for convex sets $S^i$. By convexity of $S^i$ and finiteness of $k$ there exists $z_0\in \mathbb{Z}$ such that $ \bigcup_{i=1}^k S^{i}\subset  \bigcup_{z=1}^{z_0-1} A_z$. Because $\min_{x\in A_{z_0}}x_2<\min_{x\in A_{z}}x_2$ for all $z\in \sidx{z_0-1}$ we have that there exists $j\in \sidx{t}$ such that the second component of $r_j$ is strictly negative. However, this implies that there exists $x\in S$ such that $x_2<0$ which is a contradiction with the definition of $S$.
\end{example}

\section{A general characterization of MICP representability}\label{sec:family}
The failure of characterizations of the form~\eqref{milprep} to hold calls for a more general characterization of MICP-representable sets as projections of families of sets with particular structure. Example~\ref{ex:hyperbola} hints at the union of a countable number of convex sets indexed by a set of integers. The following definition shows the precise conditions on this sets and indexes for the existence of a MICP formulation.
\begin{definition}
    Let $C\subseteq  \mathbb{R}^d$ be a convex set and $(A_z)_{z \in C}$ be a family of convex sets in $\mathbb{R}^n$. We say that the family of sets is \textbf{convex} if for all $z,z'  \in C$ and $\lambda \in [0,1]$ it holds $ \lambda A_z+ (1-\lambda) A_{z'} \subseteq A_{\lambda z+ (1-\lambda) z'}$. 

    We further say that the family is \textbf{closed} if $A_z$ is closed for all $z\in C$ and for any convergent sequences $\set{z_m}_{m\in \mathbb{N}},\set{x_m}_{m\in \mathbb{N}}$ with $z_m \in C$ and $x_m \in A_{z_m}$ we have $\lim_{m\to\infty}x_m \in A_{\lim_{m\to\infty}z_m}$.

\end{definition}

\begin{lemma}
Let $(A_z)_{z \in C}$ be a convex family and $C'\subseteq C$ be a convex set. Then $\bra{\proj \bra{A_z}}_{z\in C'}$ is a convex family, where $\proj$ is any projection onto a subset of the variables.
\end{lemma}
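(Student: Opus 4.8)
The plan is to unwind the definition of a convex family directly, relying on only two elementary facts: a linear projection maps convex sets to convex sets, and — crucially — it commutes with convex combinations. First I would record the bookkeeping: each $\proj(A_z)$ is convex, being the image of the convex set $A_z$ under a linear map; and $C'$, being convex, is closed under convex combinations, so $\lambda z+(1-\lambda)z'\in C'$ whenever $z,z'\in C'$ and $\lambda\in[0,1]$. This last point is what makes the index $\lambda z+(1-\lambda)z'$ appearing on the right-hand side legitimate, i.e. guarantees $\proj(A_{\lambda z+(1-\lambda)z'})$ is a member of the purported family.

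The core step is then a one-line chase. Fix $z,z'\in C'$ and $\lambda\in[0,1]$, and take an arbitrary $w\in \lambda\proj(A_z)+(1-\lambda)\proj(A_{z'})$. By definition $w=\lambda\proj(a)+(1-\lambda)\proj(a')$ for some $a\in A_z$ and $a'\in A_{z'}$. Linearity of $\proj$ gives $w=\proj\bra{\lambda a+(1-\lambda)a'}$, and convexity of the family $(A_z)_{z\in C}$ gives $\lambda a+(1-\lambda)a'\in \lambda A_z+(1-\lambda)A_{z'}\subseteq A_{\lambda z+(1-\lambda)z'}$. Hence $w\in \proj\bra{A_{\lambda z+(1-\lambda)z'}}$, and since $w$ was arbitrary we obtain the required inclusion $\lambda\proj(A_z)+(1-\lambda)\proj(A_{z'})\subseteq \proj\bra{A_{\lambda z+(1-\lambda)z'}}$.

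I do not anticipate any genuine obstacle here; the lemma is essentially a formal consequence of linearity. The only points worth flagging explicitly are that linearity of $\proj$ is used twice — once to pull the convex combination inside the projection, and once in the identity $\proj(\lambda A_z+(1-\lambda)A_{z'})=\lambda\proj(A_z)+(1-\lambda)\proj(A_{z'})$ — and that the hypothesis $C'\subseteq C$ convex is exactly what keeps the relevant indices within the domain of the family. No closedness hypothesis enters, so I would phrase the statement and proof entirely at the level of convex families.
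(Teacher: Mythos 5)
Your argument is correct and is precisely the ``simple'' proof the paper omits: linearity of the projection lets you commute it with the convex combination, and the convexity of $C'$ keeps the index in range. Nothing further is needed.
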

The proof of the above lemma is simple and it is omitted. 
\begin{theorem}\label{reptheo}
    A set $S \subseteq \mathbb{R}^n$ is MICP representable if and only if there exists $d \in \mathbb{N}$, a convex set $C\subseteq \mathbb{R}^d$ and a closed convex family $(B_z)_{z \in C}$ in $\mathbb{R}^{n+p}$ such that  $S=\bigcup_{z \in C \cap \mathbb{Z}^d} \proj_x\bra{B_z}$.
\end{theorem}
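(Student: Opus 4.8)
The statement is an "if and only if," so I would prove the two directions separately, and I expect the forward direction (representability implies the existence of such a family) to be the routine one and the converse (such a family yields an MICP formulation) to be the delicate one.

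For the forward direction, suppose $M \subseteq \mathbb{R}^{n+p+d}$ is a closed convex set inducing an MICP formulation of $S$, so $S = \proj_x(M \cap (\mathbb{R}^{n+p}\times\mathbb{Z}^d))$. I would set $C := \proj_z(M)$, which is convex since $M$ is, and for each $z \in C$ define $B_z := \{(x,y) : (x,y,z) \in M\}$, the slice of $M$ at height $z$. Then $\proj_x(B_z)$ is exactly the set of $x$ that appear with integer label $z$, so $S = \bigcup_{z \in C \cap \mathbb{Z}^d} \proj_x(B_z)$ is immediate. The content is checking that $(B_z)_{z\in C}$ is a closed convex family. Convexity of the family is a direct consequence of convexity of $M$: if $(x,y)\in B_z$ and $(x',y')\in B_{z'}$ then $\lambda(x,y,z)+(1-\lambda)(x',y',z')\in M$, which says exactly $\lambda(x,y)+(1-\lambda)(x',y') \in B_{\lambda z+(1-\lambda)z'}$, hence $\lambda B_z + (1-\lambda)B_{z'}\subseteq B_{\lambda z+(1-\lambda)z'}$. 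Closedness of each $B_z$ and the joint closedness condition follow from closedness of $M$: a convergent sequence $(x_m,y_m,z_m)$ with $(x_m,y_m)\in B_{z_m}$ lies in $M$, so its limit lies in $M$, i.e. in $B_{\lim z_m}$. Finally, to get the family living in $\mathbb{R}^{n+p}$ rather than needing to also project out $y$, one just keeps $p$ as is; if one prefers $B_z \subseteq \mathbb{R}^{n+p}$ with $\proj_x$ being the honest projection, that is exactly what we have.

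For the converse, suppose we are given $d$, a convex $C \subseteq \mathbb{R}^d$, and a closed convex family $(B_z)_{z\in C}$ in $\mathbb{R}^{n+p}$ with $S = \bigcup_{z\in C\cap\mathbb{Z}^d}\proj_x(B_z)$. The natural candidate is to take $M := \overline{\{(x,y,z) : z \in C,\ (x,y)\in B_z\}}$, i.e. the closure of the graph of the family. The key claims to verify are: (i) $M$ is convex, and (ii) $M \cap (\mathbb{R}^{n+p}\times\mathbb{Z}^d)$ projects onto $S$, i.e. taking the closure did not add or remove any integer slices in a way that changes the $x$-projection. Convexity of the ungraphed set follows from the family being convex — given $(x,y,z)$ and $(x',y',z')$ in the graph, the convex combination has $z$-component in $C$ and $(x,y)$-component in $\lambda B_z + (1-\lambda)B_{z'} \subseteq B_{\lambda z + (1-\lambda)z'}$ — and convexity is preserved under closure. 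For (ii), the closedness hypothesis on the family is exactly what is needed: if $(x,y,z^*)\in M$ with $z^* \in \mathbb{Z}^d$, then there are $(x_m,y_m,z_m)$ in the graph with $(x_m,y_m,z_m)\to(x,y,z^*)$, and by the closedness of the family $(x,y)\in B_{z^*}$, so $z^* \in C$ and $(x,y)$ is a genuine point of the slice; hence $\proj_x(M\cap(\mathbb{R}^{n+p}\times\mathbb{Z}^d)) = \bigcup_{z\in C\cap\mathbb{Z}^d}\proj_x(B_z) = S$. The reverse inclusion ($S$ is contained in this projection) is trivial since the graph itself is a subset of $M$.

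The main obstacle is a subtlety in direction (ii): $M$ is the closure of the graph, and a priori the closure could contain points $(x,y,z)$ whose $z$-coordinate is a limit of points in $C$ but where $(x,y)$ does not lie in $B_z$ — this is exactly why the "closed family" condition is stated jointly in $(z_m, x_m)$ rather than just asking each $B_z$ to be closed. I would make sure to invoke that joint condition at precisely the step where I argue that integer slices of $M$ agree with the $B_z$. A secondary point worth a sentence is that we need $M$ to be closed *and* convex simultaneously; closure of a convex set is convex, so this is fine, but one should confirm that the set whose closure we take is genuinely convex before closing, which is where the convex-family inequality $\lambda A_z + (1-\lambda)A_{z'}\subseteq A_{\lambda z+(1-\lambda)z'}$ does its work. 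No boundedness or recession-cone hypotheses are needed anywhere, which is the whole point of moving to this more general characterization.
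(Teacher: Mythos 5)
Your forward direction is exactly the paper's argument (slice $M$ at each $z\in\proj_z(M)$ and verify the closed convex family conditions), and it is fine. The converse, however, has a genuine gap located precisely at the step you flag as the ``main obstacle.'' You define $M$ as the closure of the \emph{entire graph} $\set{(x,y,z): z\in C,\ (x,y)\in B_z}$, and then, for $(x,y,z^*)\in M$ with $z^*\in\mathbb{Z}^d$, you take $(x_m,y_m,z_m)$ in the graph converging to $(x,y,z^*)$ and conclude ``by the closedness of the family $(x,y)\in B_{z^*}$, so $z^*\in C$.'' This is backwards: the closed-family condition can only be invoked when $\lim_m z_m$ lies in $C$, since otherwise $B_{z^*}$ is not even defined (the family is indexed by $C$, and $C$ is merely convex, not closed). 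So $z^*$ may be an integer point of $\overline{C}\setminus C$, in which case your $M$ acquires an integer slice that contributes to the projection but corresponds to no $B_z$ with $z\in C\cap\mathbb{Z}^d$. Concretely, take $n=1$, $p=0$, $d=1$, $C=[0,1)$ and $B_z=\set{z}$; this is a closed convex family, and $S=\proj_x\bra{B_0}=\set{0}$, but your $M$ is the closed segment from $(0,0)$ to $(1,1)$, whose integer slices project to $\set{0,1}\neq S$. (With $C=(0,1)$ the same family gives $S=\emptyset$ while your $M$ is nonempty and meets two integer slices.)

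The paper avoids this by setting $M:=\cconv\bigl(\bigcup_{z\in C\cap\mathbb{Z}^d} B_z\times\set{z}\bigr)$, the closed convex hull of the \emph{integer} slices only, rather than the closure of the whole graph. The remaining work is then to show that passing to the closed convex hull does not enlarge the slices at points of $C\cap\mathbb{Z}^d$: the convex-family inclusion handles the convex hull step, and the joint closedness condition handles the closure step, where it is legitimately applicable because the relevant sequences $z_m$ lie in $\conv\bra{C\cap\mathbb{Z}^d}\subseteq C$ and converge to a point of $C\cap\mathbb{Z}^d$. Your argument is repaired by replacing your $M$ with this one; the rest of what you wrote (convexity via $\lambda B_z+(1-\lambda)B_{z'}\subseteq B_{\lambda z+(1-\lambda)z'}$, and the easy containment $B_z\times\set{z}\subseteq M$ giving $S\subseteq\proj_x\bra{M\cap\bra{\mathbb{R}^{n+p}\times\mathbb{Z}^d}}$) then goes through essentially unchanged.
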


\begin{proof}
Suppose that $S$ is MICP representable. Then there exists $p,d \in \mathbb{N}$ and a closed and convex set $M \subseteq \mathbb{R}^{n+p+d}$ satisfying \eqref{MICPdef}. Let $C=\proj_z\bra{M}$ and for any $z \in C$ let $B_{z}=\set{\bra{x,y}\in \mathbb{R}^{n+p}\,:\,  \bra{x,y,z}\in M}$. The result follows by noting that    $(B_z)_{z \in C}$ is a closed convex family because $M$ is closed and convex.

For the converse, let 
 $M:=\cconv\left( \bigcup_{z \in C\cap \mathbb{Z}^d} B_z \times \{z \} \right)$. Set $M$ is closed and convex by construction and hence the only thing that remains to prove is that $B_z=\set{\bra{x,y}\,:\, \bra{x,y,z}\in M}$ for all $z\in C \cap \mathbb{Z}^d$.  The left to right containment is direct. For the reverse containment let $M':=\conv\left( \bigcup_{z \in C\cap \mathbb{Z}^d} B_z \times \{z \} \right)$ so that $M=\overline{M'}$ and $B'_z=\set{\bra{x,y}\,:\, \bra{x,y,z}\in M'}$ for all $z\in C$. Because $(B_z)_{z \in C}$ is a convex family we have $B'_z\subseteq B_z$ for all $z\in C$. Let $z\in C\cap \mathbb{Z}^d$ and $\set{\bra{x_m,y_m,z_m}}_{m\in\mathbb{N}}\subseteq M'$ be a convergent sequence such that $\lim_{m\to\infty}z_m=z$. We have for all $m$, $(x_m,y_m)\subseteq B'_{z_m}\subseteq B_{z_m}$ so $\lim_{m\to\infty}(x_m,y_m)\in B_z$ because $(B_z)_{z \in C}$ is a closed convex family.

\end{proof}
\begin{definition}
    For an MICP representable set $S \subseteq \mathbb{R}^n$ we let its MICP-dimension be the smallest $d'\in \mathbb{N}$ such that the representation from Theorem~\ref{reptheo} holds with $\dim\bra{C}=d'$.
\end{definition}

\begin{remark}
If $S=\bigcup_{z \in C \cap \mathbb{Z}^d} \proj_x\bra{B_z}$ for a convex set $C\subseteq \mathbb{R}^d$ and a closed convex family $(B_z)_{z \in C}$ in $\mathbb{R}^{n+p}$ and $C'=\conv\bra{C\cap \mathbb{Z}^d}$, then  $(B_z)_{z \in C'}$ is a  closed convex family and $S=\bigcup_{z \in C' \cap \mathbb{Z}^d} \proj_x\bra{B_z}$.
\end{remark}

\begin{remark}
Using the convex family characterization it can be proven that sets like the union of expanding circles with concave radii or the set described in Example~\ref{ex:hyperbola} are MICP representable; see Figure~\ref{fig:pictures}.
\end{remark}

\section{A necessary condition for MICP representability}\label{sec:midpoint}

In this section we prove an easy to state, and usually also to check, necessary property for any MICP representable set. Intuitively, it is saying that despite the fact that MICP representable sets could be nonconvex, they will never be very nonconvex in an appropriately defined way.

\begin{definition}
    We say that a set $S \subseteq \mathbb{R}^n$ is \textbf{strongly nonconvex}, if there exists a subset $R \subseteq S$ with $|R| = \infty$ such that for all pairs $x,y \in R$,
\begin{equation}
\frac{x+y}{2} \not \in S,
\end{equation}
that is, an infinitely large subset of points in $S$ such that the midpoint between any pair is not in $S$.
\end{definition}
\begin{lemma} \textbf{(The midpoint lemma)} \label{midpoint}
Let $S \subseteq \mathbb{R}^n$. If $S$ is strongly nonconvex, then $S$ is not MICP representable.
\end{lemma}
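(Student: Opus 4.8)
The plan is to argue by contraposition: assume $S$ is MICP representable and show it cannot be strongly nonconvex. By Theorem~\ref{reptheo}, write $S = \bigcup_{z \in C \cap \mathbb{Z}^d} \proj_x(B_z)$ for a convex set $C \subseteq \mathbb{R}^d$ and a closed convex family $(B_z)_{z \in C}$ in $\mathbb{R}^{n+p}$. Suppose for contradiction that $S$ is strongly nonconvex, witnessed by an infinite set $R \subseteq S$ whose pairwise midpoints all lie outside $S$. For each $x \in R$, pick an integer point $z(x) \in C \cap \mathbb{Z}^d$ and a witness $y(x)$ such that $(x, y(x)) \in B_{z(x)}$.

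The key step is to show that the map $x \mapsto z(x)$ must be injective, and more than that: no two points of $R$ can be assigned the same integer vector. Indeed, if $z(x) = z(x') =: z$ for distinct $x, x' \in R$, then since $B_z$ is convex and contains both $(x, y(x))$ and $(x', y(x'))$, it contains their midpoint $\bigl(\tfrac{x+x'}{2}, \tfrac{y(x)+y(x')}{2}\bigr)$, so $\tfrac{x+x'}{2} \in \proj_x(B_z) \subseteq S$, contradicting strong nonconvexity. Hence $x \mapsto z(x)$ is injective on $R$, so $\{z(x) : x \in R\}$ is an infinite subset of $C \cap \mathbb{Z}^d$. More generally, I expect the argument to show that each $z \in C \cap \mathbb{Z}^d$ can "absorb" at most one point of $R$, forcing the integer labels to be distinct.

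The main obstacle is ruling out the case where the midpoint $\tfrac{x+x'}{2}$ of two points of $R$ with \emph{distinct} labels $z(x) \neq z(x')$ nonetheless lands back in $S$ via some third integer point — this is allowed so far. To close this I would use the convexity of the family: the midpoint of the integer points, $\bar z := \tfrac{z(x) + z(x')}{2} \in C$, satisfies $\tfrac12 B_{z(x)} + \tfrac12 B_{z(x')} \subseteq B_{\bar z}$, hence $\tfrac{x+x'}{2} \in \proj_x(B_{\bar z})$. If $\bar z$ happened to be integral we would be done immediately, but in general it need not be. The resolution is a counting/pigeonhole argument over the (infinitely many) distinct integer labels combined with the convexity of $C$ and the convex-family structure; alternatively, one picks $R$ so that the labels $z(x)$ lie in a bounded region (which, since they are integral, would make them finite — a contradiction with injectivity), forcing the labels to escape to infinity, and then one extracts a convergent direction and uses the closedness of the family to produce a genuine midpoint in $S$.

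Concretely, here is the cleanest route I would pursue. Since $\{z(x) : x \in R\}$ is infinite, there exist two distinct points $a, b \in R$ whose labels $z(a), z(b)$ have the same parity in every coordinate, i.e. $z(a) \equiv z(b) \pmod 2$ (only $2^d$ parity classes, but infinitely many labels). Then $\bar z := \tfrac{z(a) + z(b)}{2} \in \mathbb{Z}^d$, and by convexity of $C$ we have $\bar z \in C$, so $\bar z \in C \cap \mathbb{Z}^d$. By the convex-family property, $\tfrac12 B_{z(a)} + \tfrac12 B_{z(b)} \subseteq B_{\bar z}$, so $\bigl(\tfrac{a+b}{2}, \tfrac{y(a)+y(b)}{2}\bigr) \in B_{\bar z}$ and therefore $\tfrac{a+b}{2} \in \proj_x(B_{\bar z}) \subseteq S$. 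This contradicts strong nonconvexity, since $a, b \in R$. Hence $S$ is not strongly nonconvex, completing the contrapositive and the proof.
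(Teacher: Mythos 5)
Your final paragraph is exactly the paper's argument: pigeonhole the infinitely many integer labels into the $2^d$ parity classes, find two points of $R$ whose labels agree mod $2$, note the integer midpoint $\bar z$ lies in $C\cap\mathbb{Z}^d$, and use convexity (of the family $(B_z)$, equivalently of $M$) to place the midpoint of the two points in $S$. The preceding detours about injectivity and escaping to infinity are unnecessary, but the proof is correct and essentially identical to the paper's.
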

\begin{proof}
Suppose we have $R$ as in the statement above and there exists an MICP formulation of $S$, that is, a closed convex set $M \subset \mathbb{R}^{n+p+d}$ such that $x \in S$ iff $\exists z \in \mathbb{Z}^d,y \in \mathbb{R}^p$ such that $(x,y,z) \in M$. 
Then for each point $x \in R$ we associate at least one integer point $z_x \in \mathbb{Z}^d$ and a $y_x \in \mathbb{R}^p$ such that $(x,y_x,z_x) \in M$. If there are multiple such pairs of points $z_x,y_x$ then for the purposes of the argument we may choose one arbitrarily.

We will derive a contradiction by proving that there exist two points $x,x' \in R$ such that the associated integer points $z_x,z_{x'}$ satisfy
\begin{equation}\label{eq:vecparity}
\frac{z_x+z_{x'}}{2} \in \mathbb{Z}^d.
\end{equation}
Indeed, this property combined with convexity of $M$, i.e., $\left(\frac{x+x'}{2},\frac{y_x+y_{x'}}{2},\frac{z_x+z_{x'}}{2}\right) \in M$
would imply that $\frac{x+x'}{2} \in S$, which contradicts the definition of $R$.

Recall a basic property of integers that if $i,j \in \mathbb{Z}$ and $i \equiv j \text{ (mod 2)}$, i.e., $i$ and $j$ are both even or odd, then $\frac{i+j}{2} \in \mathbb{Z}$. We say that two integer vectors $\alpha,\beta \in \mathbb{Z}^d$ have the same \textit{parity} if $\alpha_i$ and $\beta_i$ are both even or odd for each component $i = 1,\ldots,d$. Trivially, if $\alpha$ and $\beta$ have the same parity, then $\frac{\alpha+\beta}{2} \in \mathbb{Z}^d$. Given that we can categorize any integer vector according to the $2^d$ possible choices for whether its components are even or odd, and we notice that from any infinite collection of integer vectors we must have at least one pair that has the same parity. Therefore since $|R| = \infty$ we can find a pair $x,x' \in R$ such that their associated integer points $z_x,z_{x'}$ have the same parity and thus satisfy~\eqref{eq:vecparity}.\qed
\end{proof}

\begin{figure}[t]
    \centering
\begin{tikzpicture}[scale=0.47]
\path[draw=black,use as bounding box] (-3,-3) rectangle (3,3);

\draw[notactive] (0,0) circle (2cm);
\draw[notactiveinner] (0,0) circle (1.5cm);

\end{tikzpicture}
\begin{tikzpicture}[scale=0.47]
\path[draw=black,use as bounding box] (-3,-0.5) rectangle (3,5.5);

\node[circle,fill=mathboundary,scale=0.5] at (0,0) {};
\node[circle,fill=mathboundary,scale=0.5] at (1,1) {};
\node[circle,fill=mathboundary,scale=0.5] at (2,4) {};
\node[circle,fill=mathboundary,scale=0.5] at (-1,1) {};
\node[circle,fill=mathboundary,scale=0.5] at (-2,4) {};
\draw[notactive,fill=none] (-2,4) -- (-1,1) -- (0,0) -- (1,1) -- (2,4);
\node[draw=none,fill=none,rotate=80] at (2.2,4.9) {$\cdots$};
\node[draw=none,fill=none,rotate=-80] at (-2.2,4.8) {$\cdots$};

\end{tikzpicture}
\begin{tikzpicture}[scale=0.47]
\path[draw=black,use as bounding box] (0,0) rectangle (6,6);

\draw[notactive,fill=none] (1,1) -- (1,6);
\draw[notactive,fill=none] (2,1/2) -- (2,6);
\draw[notactive,fill=none] (3,1/3) -- (3,6);
\draw[notactive,fill=none] (4,1/4) -- (4,6);
\node[draw=none,fill=none] at (5,1) {$\cdots$};
\draw[dashed,smooth,samples=100,domain=0.1666:6] plot(\x,{(1/\x)});

\end{tikzpicture}
\begin{tikzpicture}[scale=0.47]
\path[draw=black,use as bounding box] (0,-3) rectangle (6,3);

\draw[notactive] (1,0) circle (0.8*0.6);
\draw[notactive] (2.5,0) circle (0.8*0.882);
\draw[notactive] (4,0) circle (0.8*0.969);

\node[draw=none,fill=none] at (5.5,0) {$\cdots$};
\draw[dashed,smooth,samples=100,domain=0:6] plot(\x,{0.8*(2^((2*\x+1)/3)-2^(-((2*\x+1)/3)) )/ (2^((2*\x+1)/3)+2^(-((2*\x+1)/3)))});
\draw[dashed,smooth,samples=100,domain=0:6] plot(\x,{-0.8*(2^((2*\x+1)/3)-2^(-((2*\x+1)/3)) )/ (2^((2*\x+1)/3)+2^(-((2*\x+1)/3)))});

\end{tikzpicture}

    \caption{From left to right, the annulus and the piece-wise linear function connecting the integer points on the parabola \textit{are not} mixed-integer convex representable. The mixed-integer hyperbola and the collection of balls with increasing and concave radius \textit{are} mixed-integer convex representable.}\label{fig:pictures}
\end{figure}
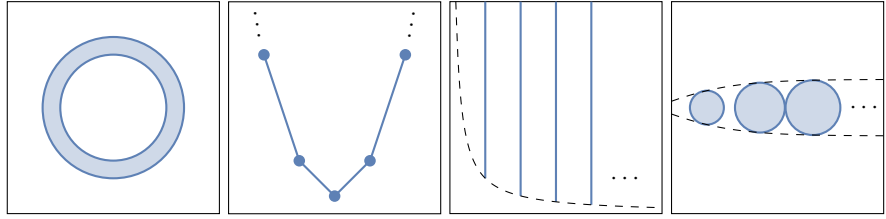

\begin{proposition}
Fix $n,m \in \mathbb{N}$ with $m,n \geq 2$. The set of matrices of dimension $m \times n$ with rank at most $1$, i.e., $C_1 := \{ X \in \mathbb{R}^{m\times n} : \operatorname{rank}(X) \le 1 \}$ is strongly nonconvex and therefore not MICP representable.

\end{proposition}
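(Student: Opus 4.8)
The plan is to exhibit an explicit infinite subset $R \subseteq C_1$ of rank-$\le 1$ matrices whose pairwise midpoints all have rank $2$, which by the midpoint lemma (Lemma~\ref{midpoint}) immediately gives nonrepresentability. Since $m,n \ge 2$, it suffices to work in the top-left $2\times 2$ block and pad the rest with zeros; a $2\times 2$ matrix of rank $\le 1$ embedded this way still has rank $\le 1$ as an $m\times n$ matrix, and the midpoint computation only involves that block. So I reduce to the case $m = n = 2$.

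For the construction I would take rank-one matrices of the form $v_t v_t^\top$ (or more simply $v_t w^\top$ for a fixed $w$) built from a one-parameter family of vectors chosen so that no two are parallel. Concretely, set $x_t := \begin{pmatrix} 1 & t \\ t & t^2 \end{pmatrix} = \begin{pmatrix} 1 \\ t \end{pmatrix}\begin{pmatrix} 1 & t\end{pmatrix}$ for $t$ ranging over an infinite set of reals, say $t \in \mathbb{N}$. Each $x_t$ has rank exactly $1$, so $R := \{x_t : t \in \mathbb{N}\} \subseteq C_1$ and $|R| = \infty$. The midpoint of $x_s$ and $x_t$ with $s \ne t$ is $\tfrac12\begin{pmatrix} 2 & s+t \\ s+t & s^2+t^2 \end{pmatrix}$, whose determinant equals $\tfrac14\bigl(2(s^2+t^2) - (s+t)^2\bigr) = \tfrac14 (s-t)^2 > 0$. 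Hence the midpoint has rank $2$ and is not in $C_1$. This shows $C_1$ is strongly nonconvex, and Lemma~\ref{midpoint} then yields that $C_1$ is not MICP representable.

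There is essentially no hard step here: the only thing to be careful about is the reduction from general $(m,n)$ to the $2\times 2$ case — one must note that embedding a $2\times 2$ rank-$\le 1$ matrix into the upper-left corner of an $m\times n$ zero matrix preserves "rank $\le 1$", and that midpoints are computed entrywise so the padded zeros stay zero and the determinant argument on the active block goes through unchanged. Everything else is the one-line determinant identity $2(s^2+t^2)-(s+t)^2 = (s-t)^2$ together with a direct appeal to the midpoint lemma.
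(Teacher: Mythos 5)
Your proof is correct and follows essentially the same approach as the paper, which also takes the family $A_k = \begin{pmatrix} 1 & k \\ k & k^2 \end{pmatrix}$ (padded with zeros) for $k \in \mathbb{N}$ and invokes the midpoint lemma; you additionally spell out the determinant computation $\det = \tfrac14 (s-t)^2 > 0$ that the paper leaves as "easy to verify."
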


\begin{proof}
We can assume $m=2$. We set for all $k \in \mathbb{N}$ the matrix 
$A_k = \begin{bmatrix}
    1      & k   & O_{1 \times n-2}\\
    k       & k^2 & O_{1 \times n-2}
\end{bmatrix} \in C_1$. We then set  $R=\{A_k | k \in \mathbb{N}  \}$. Clearly $|R|=\infty$. It is easy to verify that $\operatorname{rank}(\frac{1}{2} (A_k+A_{k'})) = 2$ for $k \ne k'$.
Therefore for any pair of distinct points in $R$, their midpoint is not in $C_1$. Therefore $C_1$ is strongly nonconvex and in particular not MICP representable.

\end{proof}
One may use the midpoint lemma to verify that the epigraph of a twice differentiable function is MICP representable if and only if the function is convex and that the graph of a twice differentiable function is MICP representable if and only if $f$ is linear. In Figure~\ref{fig:pictures}, we illustrate two more sets whose nonrepresentability follows directly from the midpoint lemma: the annulus and the piecewise linear function connecting the points $\{ (x,y) \in \mathbb{Z}^2 : y = x^2 \}$.

\section{MICP representability of subsets of natural numbers}\label{sec:naturals}

Recall that we define $\mathbb{N}=\{0,1,2,\ldots \}$ to be the set of natural numbers.
In this section we investigate the limitations of MICP for representing subsets of the natural numbers. We remind the reader that in the MILP case, it is known that a subset of the natural numbers is rational-MILP (the coefficients of the continuous relaxation polyhedron are rational numbers) representable if and only if the set is equal to the Minkowski summation of finitely many natural numbers plus the set of nonnegative integer combinations of a finite set of integer generators~\cite{VielmaSurvey}. We simplify this characterization since we are dealing with subsets of the natural numbers.

We define an infinite arithmetic progression in the natural numbers to be a sequence of natural numbers of the form $am+b,m  \in \mathbb{N}$  for some fixed $a,b \in \mathbb{N}$.

 \begin{lemma}\label{MILPlemma}
Let $S \subseteq \mathbb{N}$. $S$ is rational-MILP representable if and only if $S$ is the union of finitely many infinite arithmetic progressions with the same nonnegative step size.
\end{lemma}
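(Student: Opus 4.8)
The plan is to specialize the known characterization~\eqref{milprep} to the case $S \subseteq \mathbb{N}$ and simplify both directions. For the "if" direction, suppose $S = \bigcup_{i=1}^k \{am + b_i : m \in \mathbb{N}\}$ for fixed $a \in \mathbb{N}$ and $b_1,\dots,b_k \in \mathbb{N}$. Then $S = \bigcup_{i=1}^k \{b_i\} + \operatorname{intcone}(a)$, which is exactly of the form~\eqref{milprep} with the single generator $r_1 = a$ and each $S^i = \{b_i\}$ a (rational) polytope; hence $S$ is rational-MILP representable. (If $a = 0$ this is just a finite set, still of the claimed form with step size $0$.)

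For the "only if" direction, assume $S$ is rational-MILP representable, so by~\eqref{milprep} there are integer generators $r_1,\dots,r_t \in \mathbb{Z}$ and rational polytopes $S^1,\dots,S^k \subseteq \mathbb{R}$ with $S = \bigcup_{i=1}^k S^i + \operatorname{intcone}(r_1,\dots,r_t)$. Since $S \subseteq \mathbb{N}$ is bounded below but (if infinite) unbounded above, first observe: every $r_j$ must be $\ge 0$ (a negative generator would force points of $S$ to go to $-\infty$, contradicting $S \subseteq \mathbb{N}$), and we may discard any $r_j = 0$. If all generators vanish then $S$ is a finite union of rational polytopes in $\mathbb{Z}$, hence finite, and a finite subset of $\mathbb{N}$ is trivially a union of finitely many "degenerate" arithmetic progressions of step size $0$ — this is the one place the statement needs the convention that step size $0$ is allowed. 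Otherwise, let $a := \gcd(r_1,\dots,r_t) > 0$; a standard number-theoretic fact (the numerical-semigroup / Chicken McNugget bound) gives that $\operatorname{intcone}(r_1,\dots,r_t)$ contains every sufficiently large multiple of $a$, say all $m a$ with $m \ge m_0$, and is contained in $a\mathbb{N}$. Also each rational polytope $S^i \cap \mathbb{Z}$ is a finite set of integers (its relevant part is $S^i \cap S \subseteq \mathbb{N}$, finite since $S^i$ is bounded).

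Now fix a residue class $r \bmod a$ that is attained by some element of $S$. I claim $S$ restricted to that class is a single infinite arithmetic progression with step $a$ (possibly empty, possibly finite-but-then-we-absorb-it): pick any $s \in S$ with $s \equiv r$; then $s + \operatorname{intcone}(r_1,\dots,r_t) \supseteq \{s + ma : m \ge m_0\}$, so all but finitely many elements of the class $r$ above $s$ lie in $S$. Conversely every element of $S$ in class $r$ is of the form (element of some $S^i \cap \mathbb{Z}$) + (element of $a\mathbb{N}$), and since there are finitely many sets $S^i$ each contributing finitely many base points, the set of base residues and hence the "start" of each progression is well-defined; taking the union over the finitely many attained residue classes, and over the finitely many base points within each class, expresses $S$ as a finite union of sets of the form $\{am + b : m \in \mathbb{N}\}$ after replacing each progression by one starting at its true minimal element (every integer $\ge$ that minimum in the class is in $S$). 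A small bookkeeping point: different base points $b, b'$ in the same residue class with $b < b'$ give nested progressions $\{am+b'\} \subseteq \{am+b\}$, so we keep only the smallest, and any genuinely missing small elements of a class form a finite set absorbed as step-$0$ progressions.

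The main obstacle I anticipate is purely organizational rather than deep: carefully handling the finite "defect" between $\operatorname{intcone}(r_1,\dots,r_t)$ and the full lattice $a\mathbb{Z}_{\ge 0}$ (the Frobenius gap) and the finitely many exceptional small elements, and reconciling the paper's definition of an infinite arithmetic progression (which starts at $m = 0$) with progressions whose natural starting index is positive — this is why allowing step size $0$ and being willing to express a single residue class as the union of a true infinite progression plus a finite correction set is essential. Once one commits to that convention, the argument is a routine decomposition by residue class modulo $a = \gcd(r_1,\dots,r_t)$.
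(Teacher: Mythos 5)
Your overall route---reduce to $S=\{b_1,\ldots,b_n\}+\operatorname{intcone}(r_1,\ldots,r_t)$, pass to $a=\gcd(r_1,\ldots,r_t)$, and invoke the numerical-semigroup (Chicken McNugget) fact that the intcone eventually contains every multiple of $a$---is the same as the paper's, which cites Schur's theorem for exactly this step, and your ``if'' direction is fine. The gap is in the final assembly of the ``only if'' direction, and it is not merely organizational: the lemma demands that \emph{all} progressions in the union have \emph{the same} step size, and your construction does not deliver this. Concretely, two of your claims fail. First, the union over base points of the full step-$a$ progressions $\{am+b_i : m\in\mathbb{N}\}$ is in general a \emph{strict superset} of $S$ because of the Frobenius gaps; e.g.\ for $S=\{0\}+\operatorname{intcone}(2,3)=\{0,2,3,4,\ldots\}$ one has $a=1$ and $\{m+0 : m\in\mathbb{N}\}=\mathbb{N}\not\subseteq S$, so your parenthetical ``every integer $\ge$ that minimum in the class is in $S$'' is false. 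Second, your proposed repair---covering the finitely many exceptional elements by step-$0$ singleton progressions alongside the step-$a$ progressions---produces a union of progressions with \emph{two different} step sizes whenever $S$ is infinite, which is exactly what the statement forbids (and what the surrounding theorem relies on, since it keeps the finite part $A_0$ separate from the rational-MILP-representable part).

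The missing idea is to enlarge the step. Since every $s\in S$ satisfies $s+\operatorname{intcone}(r_1,\ldots,r_t)\subseteq S$, and the intcone contains every multiple $Na$ of $a$ with $N$ beyond the Frobenius bound, one can choose a single common step $L$ (a sufficiently large multiple of $a$; the paper takes $L=g\prod_i\alpha_i$ in its notation, where the $\alpha_i$ come from Schur's theorem) such that $s+L\mathbb{N}\subseteq S$ for \emph{every} $s\in S$, including the exceptional small elements. Then $S$ equals the union, over the finitely many $s\in S$ below an explicit threshold, of the full progressions $\{s+LK : K\in\mathbb{N}\}$, all with the identical step $L$; the paper verifies equality by a maximality argument on how far one can step back by $L$ while remaining in $S$. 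With that one change your argument goes through; without it, it proves only the weaker statement ``finite union of arithmetic progressions plus a finite set.''
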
   

The proof of Lemma \ref{MILPlemma} is in the appendix.
We now compare MICP representability with rational-MILP representability on $\mathbb{N}$.  
To be able to deduce a characterization for MICP in the naturals similar to the one we have for rational-MILP in $\mathbb{N}$ it is natural to put some ``rationality" restrictions on the MICP representations as well. 

For instance we could require $M$ to have a representation of the form $Ax-b\in K$ where $A$ and $b$ are an  appropriately sized rational matrix and rational vector, and $K$ is a specially structured convex cone (e.g. the semidefinite cone or a product of Lorentz cones defined as $\mathcal{L}_n := \{ (t,x) \in \mathbb{R}^n : ||x||_2 \le t\}$) or to have polynomial constraints with rational coefficients. Unfortunately these restrictions can still result in representable sets that do not contain any infinite arithmetic progression and hence are far from being rational-MILP representable. We present such an example below.

\begin{example}\label{ex:nonrational}
For $x\in \mathbb{R}$ let $f(x)=x-\lfloor x \rfloor$. For $\varepsilon>0$ consider the set 
\begin{align}
K_\varepsilon&=\{x\in \mathbb{R}^2\,:\, (x_2+\varepsilon,x_1,x_1) \in \mathcal{L}_3,(2x_1+2\varepsilon,x_2,x_2) \in \mathcal{L}_3,x_1,x_2\geq 0\}\\
     &=\{x\in \mathbb{R}^2\,:\, \sqrt{2} x_1 -  \varepsilon \leq x_2 \leq  \sqrt{2} x_1 + \sqrt{2} \varepsilon,\quad x_1,x_2\geq 0\}
\end{align}
and 
$S_\varepsilon=\{x_1\in \mathbb{R}: \exists x_1 \text{ s.t. } (x_1,x_2)\in K_\varepsilon \cap \mathbb{Z}^2\}=\{x\in \mathbb{N}: f(\sqrt{2}x )\notin (\varepsilon, 1-\sqrt{2}\varepsilon) \}$. Let $\varepsilon_0 <1/(1+\sqrt{2})$ be rational (e.g. $\varepsilon=0.4$). Suppose that for some $a,b \in \mathbb{N}$,$a \geq 1$ it holds  $a k +b \in S_{\varepsilon_0}$ for all $k\in \mathbb{N}$. $\emptyset\neq (\varepsilon_0, 1-\sqrt{2}\varepsilon_0)\subseteq (0,1)$, so by Kronecker’s Approximation Theorem we have that there exist $k_0\in \mathbb{N}$ such that $f(\sqrt{2}(a k_0 +b) )\in (\varepsilon_0, 1-\sqrt{2}\varepsilon_0)$ which is a contradiction. Therefore the set $S_\varepsilon$ does not contain an arithmetic progression and in particular it is is not rational-MILP representable.
\end{example}

We follow now a different path to define what rational MICP-representability is and we characterize it completely. Quite surprisingly it becomes almost equivalent with rational-MILP representability.

We give the following definitions
\begin{definition}\label{rationaldefc}
    We say that an unbounded convex set $C \subseteq \mathbb{R}^d$ is \textbf{rationally unbounded} if the image $C'$ of any rational linear mapping of $C$, is either bounded or there exists $r\in \mathbb{Z}^d\setminus \set{0}$ such that $x+\lambda r \in C'$ from any $x \in \mathbb{Z}^d \cap C'$ and $\lambda\geq 0$.

    Let $A \subseteq \mathbb{Z}^d$ be an infinite set of integer points. We say that $A$ is \textbf{rationally unbounded} if there exists a finite subset $I \subset A$ such that the set $\conv(A \setminus I)$ is rationally unbounded.

    Finally, we say that a set $S$ is \textbf{rational-MICP representable} if there exists an MICP representation for $S$ with convex family $(A_x)_{x \in C}$ with $A_x \ne \emptyset, \forall x \in C$ such that the set of integer points $C \cap \mathbb{Z}^d$ is either bounded or rationally unbounded.
    
\end{definition}
It is easy to see that the set $K_{\epsilon}$ from Example \ref{ex:nonrational} is not rationally unbounded.

To completely characterize the rational-MICP representable subsets of the natural numbers we will use the following lemmata. We prove Lemma \ref{lem:finiteunion} in the appendix and Lemma \ref{technicalLemma} in Section 6.

\begin{lemma}\label{lem:finiteunion}
Any union of finitely many infinite arithmetic progressions is equal to a union of finitely many arithmetic progressions with the same step length.
\end{lemma}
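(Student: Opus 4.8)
The plan is to take as the common step the least common multiple of the individual steps, and to split each progression into residue classes modulo that lcm.

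Write the given progressions as $P_i = \{a_i m + b_i : m \in \mathbb{N}\}$ for $i \in \sidx{k}$, with $a_i, b_i \in \mathbb{N}$. First I would dispose of the degenerate case $a_i = 0$: then $P_i = \{b_i\}$ is a single point, i.e.\ a one-term (finite) arithmetic progression of \emph{any} step; in particular, if every $a_i = 0$ the union is finite and the conclusion holds trivially (with step $0$, say). So assume some $a_i \geq 1$ and set $d := \operatorname{lcm}\{a_i : a_i \geq 1\}$, which is a positive integer.

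For each $i$ with $a_i \geq 1$, write $d = a_i q_i$ with $q_i \in \mathbb{N}$, $q_i \geq 1$. Division with remainder gives a unique decomposition $m = q_i t + s$ for every $m \in \mathbb{N}$, with $t \in \mathbb{N}$ and $s \in \{0,1,\ldots,q_i-1\}$, and then $a_i m + b_i = d t + (a_i s + b_i)$. Hence
\[
P_i = \bigcup_{s=0}^{q_i-1} \{\, d t + (a_i s + b_i) : t \in \mathbb{N} \,\},
\]
a union of $q_i$ infinite arithmetic progressions in $\mathbb{N}$, each of step $d$ (the offsets $a_i s + b_i$ lie in $\mathbb{N}$). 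Taking the union over all $i$ — and appending the finitely many singletons coming from the $a_i = 0$ indices, each regarded as a (finite) arithmetic progression of step $d$ — writes $\bigcup_{i=1}^{k} P_i$ as a union of finitely many arithmetic progressions all of step length $d$, which is exactly what is claimed.

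I do not expect any genuine obstacle: the argument is elementary. The only points needing (minor) care are the degenerate $a_i = 0$ progressions, absorbed above by allowing one-term progressions in the conclusion, and checking that each sub-progression $\{d t + (a_i s + b_i)\}$ has natural-number parameters so that it is a legitimate arithmetic progression in $\mathbb{N}$.
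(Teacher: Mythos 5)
Your proof is correct, and it takes a genuinely different (and cleaner) route than the paper's. You choose the common step to be $d=\operatorname{lcm}$ of the nonzero individual steps and, via division with remainder $m=q_it+s$, decompose each progression \emph{exactly} into its $q_i$ residue classes modulo $d$; this is a constructive identity with nothing left to verify beyond the bijection $(t,s)\leftrightarrow m$, and it produces the smallest natural common step. The paper instead takes the common step to be the \emph{product} of the individual steps and argues by a two-inclusion set equality: it defines the finite set $J$ of elements of the union lying below an explicit bound, and shows every element can be shifted down by multiples of the common step into $J$ via a ``take the maximal such $M$'' extremal argument. Both proofs reduce the union to finitely many offsets modulo a fixed step, but yours is more direct and avoids the paper's case analysis and the larger step; the paper's version has the mild advantage of exhibiting the offsets as precisely the small elements of the set itself, which is the form it reuses in the proof of Lemma~\ref{MILPlemma}. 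Your separate treatment of the degenerate zero-step progressions as one-term progressions is a reasonable reading of the lemma's conclusion (which does not require the output progressions to be infinite), and in the paper's application of the lemma only positive-step progressions occur, so this convention causes no trouble.
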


\begin{lemma}\label{technicalLemma}
Suppose $S \subseteq \mathbb{N}$ is rational MICP representable with MICP-dimension $d'$. 
Then either $S$ is a finite set or there exists $k \in \mathbb{N}$ such that $S=T_0 \cup S_0 \cup \bigcup_{i=1}^k S_i$ where $T_0$ is a finite set of natural numbers, $S_0$ is a finite union of infinite arithmetic progressions and for each $i\in \sidx{k}$, $S_i$ is rational-MICP representable with MICP-dimension at most $d'-1$.
\end{lemma}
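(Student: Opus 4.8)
The plan is to start from a rational-MICP representation $S = \bigcup_{z \in C \cap \mathbb{Z}^d} \proj_x(A_z)$ with $C$ convex, $(A_z)_{z\in C}$ a closed convex family, $A_z \ne \emptyset$ for all $z\in C$, and $C\cap\mathbb{Z}^d$ either bounded or rationally unbounded, and where $\dim C = d'$ is the MICP-dimension. If $C \cap \mathbb{Z}^d$ is bounded, then it is finite, so $S$ is a finite union of projections of nonempty convex sets, each $\proj_x(A_z)\subseteq\mathbb{N}$; since a convex subset of $\mathbb{N}$ is either a single point or one point only (a convex set in $\mathbb{R}$ contained in $\mathbb{N}$ has at most one element), $S$ is finite and we are done with $T_0 = S$. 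So assume $C\cap\mathbb{Z}^d$ is rationally unbounded; in particular $C$ is unbounded and $d' \ge 1$.

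Next I would peel off the recession direction. Since $C\cap\mathbb{Z}^d$ is rationally unbounded, applying the definition to the identity mapping (or, after passing to $\conv(C\cap\mathbb{Z}^d)$ as in the Remark, to a suitable rational coordinate), there is a finite subset $I$ and an integer vector $r\in\mathbb{Z}^d\setminus\{0\}$ such that $z + \lambda r \in \conv((C\cap\mathbb{Z}^d)\setminus I)$ for every integer point $z$ of this set and every $\lambda\ge 0$; in fact $z + r\in C$ whenever $z\in C\cap\mathbb{Z}^d$ is far enough along $r$. The finitely many $z\in I$ contribute finitely many points to $S$, which we absorb into $T_0$. For the rest, I claim that for each residue class of $C\cap\mathbb{Z}^d$ modulo $r$ that contains a full ray $\{z_0 + m r : m \in \mathbb{N}\}$, the union $\bigcup_{m\in\mathbb{N}} \proj_x(A_{z_0 + mr})$ is, up to finitely many points, an infinite arithmetic progression: by the convex family property the sets $\proj_x(A_{z_0+mr})$ satisfy $\tfrac12\proj_x(A_{z_0+mr}) + \tfrac12\proj_x(A_{z_0+(m+2)r}) \subseteq \proj_x(A_{z_0+(m+1)r})$, so the single natural numbers $s_m := \proj_x(A_{z_0+mr})$ (again, convex subsets of $\mathbb{N}$ are singletons once nonempty) satisfy $s_m + s_{m+2} \le 2 s_{m+1}$ wherever all three are nonempty, i.e. the finite-difference sequence is eventually monotone; combined with integrality and the fact that it is either eventually constant-step or its gaps grow, one shows that the subsequence where $A$ is nonempty eventually has constant step, giving an arithmetic progression. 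Collecting these finitely many progressions (finitely many residue classes mod $r$) into $S_0$ handles the part of $C\cap\mathbb{Z}^d$ lying in full $r$-rays.

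Finally, the points of $C\cap\mathbb{Z}^d$ not covered by $T_0$-exceptions or by the full $r$-rays lie in slices of $C$ of the form $C \cap \{z : \langle c, z\rangle = \text{const}\}$ for finitely many rational hyperplane-normals $c$ transverse to $r$ — more precisely, after quotienting out $r$ one is left with points in a lower-dimensional convex region, and restricting the family $(A_z)$ to each such affine slice $C\cap H_j$ gives a closed convex family over a convex set of dimension $\le d-1$ whose integer points are again bounded or rationally unbounded (rational unboundedness is inherited by rational affine slices, since a rational linear image of the slice is a rational linear image of $C$). Each such restriction is a rational-MICP representation of the corresponding $S_i := \bigcup_{z\in C\cap H_j\cap\mathbb{Z}^d}\proj_x(A_z)$, and its MICP-dimension is at most $d-1 \le d'-1$ — here I would invoke the definition of MICP-dimension together with the fact that $\dim(C\cap H_j) \le \dim C - 1 = d'-1$ when $H_j$ is not contained in $\aff(C)$, handling the degenerate case where the slice is still $d'$-dimensional by noting it would then have to coincide with a full-ray part already accounted for. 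Taking the union over the finitely many slices gives $\bigcup_{i=1}^k S_i$, and $S = T_0 \cup S_0 \cup \bigcup_{i=1}^k S_i$ as desired.

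The main obstacle I anticipate is the bookkeeping in the peeling step: carefully decomposing $C\cap\mathbb{Z}^d$ into the exceptional finite set, the full $r$-rays, and the lower-dimensional slices, while ensuring that (i) the slices genuinely drop the MICP-dimension and (ii) the inherited families on the slices still satisfy the rational-unboundedness hypothesis needed to continue the induction. The argument that the single-point projection sequences $s_m$ along a ray eventually form an arithmetic progression — essentially a discrete convexity/monotonicity argument — is the other place where care is needed, since a convex family only gives a one-sided inequality on second differences and one must rule out unbounded-gap behavior using that $S\subseteq\mathbb{N}$ forces the $s_m$ (when defined) to be honest natural numbers with the sequence of gaps nonincreasing and hence eventually constant.
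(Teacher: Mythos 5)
Your opening moves match the paper: the bounded case gives a finite $S$; for integer $z$ the projection $\proj_x\bra{A_z}$ is a convex subset of $\mathbb{N}$ and hence a singleton $s$; and along a single integer ray $z_0+\mathbb{N}r$ the convex-family property forces $\tfrac{1}{2}s_m+\tfrac{1}{2}s_{m+2}\in\{s_{m+1}\}$, i.e.\ an exact second-difference identity, so each ray carries an honest arithmetic progression (with step $\ge 0$ because $S\subseteq\mathbb{N}$) -- you do not even need the eventual-monotonicity discussion. The gap comes right after: you assert there are ``finitely many residue classes mod $r$'' and collect ``these finitely many progressions'' into $S_0$. For $d\ge 2$ the set $C\cap\mathbb{Z}^d$ splits into \emph{infinitely} many rays $z_0+\mathbb{N}r$ (take $C=\mathbb{R}_+^2$, $r=e_1$: one ray for each value of $z_2$), each with its own step $a_{z_0}$ and offset $b_{z_0}$, and taming this infinite union is the entire content of the lemma. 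The paper does it in two moves that are absent from your outline. First, it partitions $C\cap\mathbb{Z}^d$ into the $2^d$ parity classes and shows the step is constant on each class, by applying Lemma~\ref{linearlemma} to the convex hull of the three rays through $z$, $z'$ and $\frac{z+z'}{2}$ (this is also where one genuinely needs $h:=f-g$ to vanish on whole rays and polytopes of rays, not just at integer points, so that $f=g$ is affine there); since a progression of step $s>0$ with any offset sits inside $\bigcup_{b=0}^{s-1}\set{sm+b : m\in\mathbb{N}}$, the positive-step rays then yield a finite union of progressions. Second, the zero-step rays are still infinitely many and each contributes one number $b_z$, so they cannot be absorbed into $T_0$; the paper quotients out the direction $r$ via a unimodular (Hermite normal form) change of variables so that these rays are indexed by $C_i\cap\mathbb{Z}^d$ for a \emph{rational linear image} $C_i$ of $C$ of dimension at most $d'-1$, and equips it with the new closed convex family $\tilde B^i_z=\bigcup_{\lambda}B_{t^i+2z+\lambda r}\times\set{\lambda}$.

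Your replacement for that second move -- restricting $(A_z)$ to finitely many affine slices $C\cap H_j$ transverse to $r$ -- does not work as stated. Finitely many hyperplane slices cannot contain the infinitely many leftover integer points, and rational unboundedness is defined through rational linear images of the whole index set, a property a slice need not inherit; the paper's $C_i$ inherits it precisely because it is itself an image of $C$. The degenerate case you wave at (``the slice is still $d'$-dimensional'') is exactly the case that must be excluded by construction, which the quotient does automatically since it kills the direction $r\in L(C)$. So the skeleton is right for a single ray, but the cross-ray bookkeeping -- finitely many step values via parity classes, and dimension reduction via a quotient rather than slices -- is missing and is where the real work lies.
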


\begin{theorem}
Suppose that $S \subseteq \mathbb{N}$.  Then following are equivalent.
\begin{itemize}
    \item[(a)] $S$ is rational-MICP representable 
\item[(b)] There exists $k \in \mathbb{N}$ such that $S=A_0 \cup \left( \bigcup_{i=1}^k A_i \right)$, where $A_0 \subseteq \mathbb{N}$ is a finite set and for each $i=1,\ldots,k,$ $A_i \subseteq \mathbb{N}$ is an infinite arithmetic progression. 
\item[(c)] There exists a finite set $A_0$ and a rational-MILP representable set $T$ such that $S=A_0 \cup T$. 
\end{itemize}
\end{theorem}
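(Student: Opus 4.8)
The plan is to establish the cycle of implications $(b)\Rightarrow(c)\Rightarrow(a)\Rightarrow(b)$, relying on Lemma~\ref{MILPlemma}, Lemma~\ref{lem:finiteunion}, and Lemma~\ref{technicalLemma} together with the general machinery of Theorem~\ref{reptheo}. The implications $(b)\Leftrightarrow(c)$ are the routine ones and I would dispatch them first. For $(b)\Rightarrow(c)$: by Lemma~\ref{lem:finiteunion}, $\bigcup_{i=1}^k A_i$ equals a finite union of infinite arithmetic progressions with a common step size, which by Lemma~\ref{MILPlemma} is rational-MILP representable; take $T$ to be that set and $A_0$ the given finite set. For $(c)\Rightarrow(b)$: a rational-MILP representable $T\subseteq\mathbb{N}$ is, by Lemma~\ref{MILPlemma}, a finite union of infinite arithmetic progressions (same step size), so $S=A_0\cup T$ has exactly the form required in $(b)$, absorbing any finite leftover terms into $A_0$.

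Next I would prove $(c)\Rightarrow(a)$. Here the point is that rational-MILP representability is a special case of rational-MICP representability: a rational polyhedron $M$ gives a closed convex family whose index set $C\cap\mathbb{Z}^d$, with $C$ a rational polyhedron, is easily seen to be either bounded or rationally unbounded (its recession cone is rational, so one can pick an integer recession direction $r$). Then I would argue that the class of rational-MICP representable subsets of $\mathbb{N}$ is closed under adding a finite set $A_0$: given a formulation for $T$ with family $(B_z)_{z\in C}$, one forms the disjoint union with the bounded formulation of the finite set $A_0$ (e.g. using Proposition~\ref{balasform}, or simply by adding one extra integer coordinate selecting between the two pieces); the resulting index set is a union of a rationally unbounded set and a bounded set, which is again rationally unbounded, and the family stays closed and convex. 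Hence $S=A_0\cup T$ is rational-MICP representable.

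The substantive implication is $(a)\Rightarrow(b)$, and this is where Lemma~\ref{technicalLemma} does the work, via induction on the MICP-dimension $d'$. In the base case $d'=0$ the index set $C\cap\mathbb{Z}^d$ is a single point (or empty), so $S$ is the projection of one convex set and hence a finite set (a convex subset of $\mathbb{N}$ — actually I should be careful: a bounded convex subset of $\mathbb{R}$ intersected appropriately, in any case it is finite because $S\subseteq\mathbb{N}$ and it is the projection of a single convex set, so its convex hull is an interval containing it; finiteness needs that the interval is bounded, which follows since $d'=0$ forces $C$ bounded). For the inductive step, apply Lemma~\ref{technicalLemma}: either $S$ is finite — and then $(b)$ holds with $k=0$ — or $S=T_0\cup S_0\cup\bigcup_{i=1}^k S_i$ with $T_0$ finite, $S_0$ a finite union of infinite arithmetic progressions, and each $S_i$ rational-MICP representable of MICP-dimension at most $d'-1$. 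By the induction hypothesis each $S_i$ has the form in $(b)$, i.e.\ a finite set union finitely many infinite arithmetic progressions; taking the union over $i$, together with $S_0$ and $T_0$, again yields a finite set union finitely many infinite arithmetic progressions, which is the form claimed in $(b)$.

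The main obstacle is entirely concentrated in Lemma~\ref{technicalLemma} (whose proof is deferred to Section~6); granting it, the theorem is a clean induction plus the elementary equivalences above. The one place to be careful in the write-up is the degenerate/base cases — making sure that ``finite union of infinite arithmetic progressions'' is interpreted so that a finite set is the empty such union together with $A_0$, and that when $d'=0$ one genuinely gets finiteness — but these are bookkeeping points rather than real difficulties. I would also double-check in $(c)\Rightarrow(a)$ that the mechanism for adjoining the finite set $A_0$ preserves the precise technical requirement in Definition~\ref{rationaldefc} that $A_x\neq\emptyset$ for all $x\in C$ and that the combined index set is rationally unbounded in the sense required (invariance under rational linear maps), which is where one uses that a rational linear image of a bounded-plus-rationally-unbounded set is again bounded-or-rationally-unbounded.
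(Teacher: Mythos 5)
Your proposal is correct and follows essentially the same route as the paper: $(b)\Leftrightarrow(c)$ via Lemmas~\ref{lem:finiteunion} and~\ref{MILPlemma}, $(a)\Rightarrow(b)$ by iterating Lemma~\ref{technicalLemma} down the MICP-dimension, and $(c)\Rightarrow(a)$ by a selector-variable construction joining the finite set to the MILP part (the paper just writes this formulation out explicitly, with the binary $\eta$ playing the role of your disjoint-union switch, and verifies rational unboundedness by computing that the convex hull of the surviving integer points has the single rational recession direction $e_1$ --- exactly the check you flag as the point needing care).
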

\begin{proof}
We start by proving that $(c)$ implies $(a)$. We will use Lemma \ref{MILPlemma}. Say $A_0 = \{a_1,\ldots,a_m\}$ and $T = \{ b_1,\ldots,b_n \} + \operatorname{intcone}(z)$. Then $x \in A_0 \cup T$ iff $\exists\, x_1,x_2,\beta,q,\alpha,\nu,\lambda,\eta,t$ such that
\begin{alignat*}{3}
&x = x_1 + x_2,\quad x_1 = \sum\nolimits_{i=1}^n \beta_i + qz,\quad x_2 = \sum\nolimits_{i=1}^m \alpha_i,\quad \beta_i = b_i\nu_i \; \forall  i \in \sidx{n},\\ 
&\alpha_i = a_i\lambda_i \; \forall i \in \sidx{m},\quad \sum\nolimits_{i=1}^n \nu_i = \eta,\quad  \sum\nolimits_{i=i}^m \lambda_i = 1-\eta,\quad q^2 \le \eta t, \\ &t \ge 0,\quad \lambda_i \in \{0,1\}\; \forall  i \in \sidx{m},\quad \nu_i \in \{0,1\}\;\forall i \in\sidx{n},\quad \eta \in \{0,1\}, q \in \mathbb{N}
\end{alignat*}
We claim that this is a rational-MICP representation.
Consider the integer variables $\lambda_i,\nu_i,\eta,q$. By excluding the finitely many integer extreme points with $\eta=0$ we have that it is enough to consider only the integer points with $\eta=1$ which imply all $\lambda_i=0$ and hence we have to consider only the integer variables $\nu_i,q$ that satisfy $\sum_{i=1}^n \nu_i=1, \nu_i \in \{0,1\},i=1,2,\ldots,n$ and $q \in \mathbb{N}$. The convex hull of these integer points is $\mathbb{R}_{+} \times \{ x \in \mathbb{R}_{+}^n | \sum_{i=1}^n x_i=1 \}$. But this is rationally unbounded, as it has exactly one rational recession direction $e_1:=(1,0,0,\ldots,0)$ and any rational linear map $t$ will either satisfy $t(e_1) \not = 0$, in which case $t(e_1)$ is a rational recession direction for the image, or $t(e_1)=0$, in which case the image is bounded.

Now $(b)$ implies $(c)$ because of Lemma \ref{lem:finiteunion} and noticing that finite union of infinite arithmetic progressions with the same step length is immediately rational-MILP representable because of Lemma \ref{MILPlemma}.

Finally we prove that $(a)$ implies $(b)$.  Suppose $S$ is rational-MICP representable.   We will use Lemma \ref{technicalLemma}. We first apply to $S$. If it is finite we are done. If not we apply it to each of the $S_i$, $i=1,2,\ldots, m$ produced by Lemma \ref{technicalLemma}. Continuing like this with at most $d$ iterations we prove our result.

\end{proof}

Despite the similarity that the above result indicates, rational MICP-representable subsets of the natural numbers and rational MILP-representable subsets of the natural numbers are not identical as the example below illustrates.

\begin{example}
Consider the set $S=\{1\} \cup 2 \mathbb{N}$. Then the set is rational-MICP representable from the above theorem. On the other hand, it cannot be written as the Minkowski summation of a finite set plus a finitely generated integral monoid and therefore it is not rational-MILP representable. To see the last, suppose it could be written like this by contradiction. Then consider one the generators of the monoid $z_1$. Assume $z_1$ is odd. Then $2+z_1$ should belong to $S$ but it is odd and bigger than 1, a contradiction. Assume $z_1$ is even. Then $1+z_1$ should belong to $S$ but it is odd and bigger than 1, a contradiction. The proof is complete.
\end{example}

We end the section with a global limitation of MICP representability in the subsets of the integers which hold without any type of rationality restriction. Its proof is based on the midpoint lemma.

\begin{theorem}
    The set of prime numbers $\mathbb{P}$ is strongly nonconvex and therefore not MICP representable.
    
    \end{theorem}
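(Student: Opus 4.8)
The plan is to invoke the midpoint lemma (Lemma~\ref{midpoint}): it suffices to produce an infinite set $R \subseteq \mathbb{P}$ such that for every pair of distinct $p,q \in R$ the midpoint $\tfrac{p+q}{2}$ is not prime. All primes in $R$ other than $2$ are odd, so $\tfrac{p+q}{2}$ will automatically be a positive integer, and it is enough to arrange that each such midpoint is \emph{composite}.

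I would construct $R=\{p_1<p_2<\cdots\}$ greedily, starting from $p_1=3$. Suppose odd primes $p_1<\cdots<p_k$ have been chosen with $\tfrac{p_i+p_j}{2}$ composite for all $i<j\le k$. Pick distinct odd primes $q_1,\dots,q_k$ with $q_i \notin\{p_1,\dots,p_k\}$; this is possible since there are infinitely many primes. The goal is to choose $p_{k+1}$ so that $q_i \mid \tfrac{p_i+p_{k+1}}{2}$ for every $i$, which is equivalent to the congruences $p_{k+1}\equiv -p_i \pmod{2q_i}$. The ``mod $2$'' component of all these congruences is the same (it just says $p_{k+1}$ is odd), and $2,q_1,\dots,q_k$ are pairwise coprime, so by the Chinese Remainder Theorem there is a single residue $c$ modulo $N:=2q_1\cdots q_k$ encoding all of them. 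Furthermore $\gcd(c,N)=1$: $c$ is odd, and for each $i$ we have $c\equiv -p_i \pmod{q_i}$ with $q_i\nmid p_i$ (distinct primes), so $c\not\equiv 0\pmod{q_i}$. By Dirichlet's theorem on primes in arithmetic progressions there are infinitely many primes congruent to $c$ modulo $N$; choose $p_{k+1}$ to be one of them exceeding $\max\{p_k,\,2q_1,\dots,2q_k\}$.

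It remains to verify the invariant for $p_1,\dots,p_{k+1}$. For $i<j\le k$ the midpoint $\tfrac{p_i+p_j}{2}$ is composite by the induction hypothesis. For $i\le k$, $\tfrac{p_i+p_{k+1}}{2}$ is a positive integer (sum of two odd numbers), it is divisible by $q_i$ by construction, and it exceeds $\tfrac{p_{k+1}}{2}>q_i$; hence it is a proper multiple of the prime $q_i$ and therefore composite. So the construction never stalls and yields an infinite $R\subseteq\mathbb{P}$ all of whose pairwise midpoints are composite, and in particular not prime. Thus $\mathbb{P}$ is strongly nonconvex, and Lemma~\ref{midpoint} gives that $\mathbb{P}$ is not MICP representable.

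The only genuine obstacle is ensuring that the extension step always succeeds, and the decisive point is to assign a \emph{distinct} auxiliary prime $q_i$ to each previously chosen $p_i$. This is what keeps the target residue $c$ coprime to the combined modulus $N$, which is precisely the hypothesis needed to apply Dirichlet's theorem. A naive attempt to place all midpoints in one fixed residue class of primes cannot work, because any residue class modulo $m$ containing infinitely many primes is coprime to $m$, so the midpoints cannot be forced to share a nontrivial common factor with $m$.
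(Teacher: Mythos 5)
Your proof is correct and follows essentially the same strategy as the paper's: an inductive construction of $R$ that at each step invokes Dirichlet's theorem on primes in arithmetic progressions to find the next prime in a residue class forcing every new midpoint to be a proper multiple of some integer greater than one, hence composite. The only difference is bookkeeping: the paper uses the single congruence $p \equiv 1 \pmod{M!}$ with $M = \prod_i p_i$ (so that $1+p_i$ divides the modulus for every $i$), whereas you combine congruences modulo $2q_i$ for fresh auxiliary primes $q_i$ via the Chinese Remainder Theorem.
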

    \begin{proof}
    We will inductively construct a subset of primes such that no midpoint of any two elements in the set is prime.
    
Let $\{p_1,\ldots,p_n\}$ be a set of such primes. We will find a prime $p$ such that $\{p_1,\ldots,p_n,p\}$ has no prime midpoints. (We may start the induction with $p_1 = 3$, $p_2=5$.)

Set $M = \prod_{i=1}^n p_i$. Choose any prime $p$ (not already in our set and not equal to 2) such that $p \equiv 1 \pmod{M!}$. By Dirichlet's theorem on arithmetic progressions there exist an infinite number of primes of the form $1 + kM!$ because 1 and $M!$ are coprime, so we can always find such $p$.

Suppose for some $i$ we have $q := \frac{p+p_i}{2} \in \mathbb{P}$. By construction, we have $p + p_i \equiv 1 + p_i\pmod{M!}$, so $\exists\, k$ such that $p + p_i = k\cdot M! + 1 + p_i$. Note that $M$ is larger than $p_i$ so $M!$ will contain $(1+p_i)$ as a factor; in other words, $(1+p_i)$ divides $M!$, so it divides also $k \cdot M!+1+p_i=p+p_i$. In fact we can write $p+p_i = k'(1+p_i)$ for some $k' \in \mathbb{Z}_{\geq 0}$. We claim that $k'=1$. Indeed $q =\frac{p+p_i}{2}= k'\frac{1+p_i}{2}$. Note $1+p_i$ is even, so $\frac{1+p_i}{2}$ is an integer bigger than 1 as $p_i>1$. But $q$ is prime and therefore since it is written as the product of $k'$ and $\frac{1+p_i}{2}>1$ it must be the case that $k'=1$ as claimed. But $k'=1$ implies that $p+p_i =1+p_i$, i.e., $p = 1$ which is a contradiction.
    \end{proof}

\section{Proof of Lemma~\ref{technicalLemma}}

We first state Lemma~\ref{linearlemma} whose proof is given in the appendix.
\begin{lemma}\label{linearlemma}
Let $C\subseteq \mathbb{R}^d$ be a convex set, $h:C\to \mathbb{R}$ a nonpositive convex function and $\set{x^i}_{i=1}^k\subset C$ such that $h(x^1)= 0$ and $x^1\in \relint\bra{\aff\bra{\set{x^i}_{i=1}^k}\cap C}$. Then $h(x)=0$ for all $x\in \aff\bra{\set{x^i}_{i=1}^k}\cap C$.
\end{lemma}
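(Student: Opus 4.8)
# Proof Plan for Lemma~\ref{linearlemma}

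\textbf{Setup and strategy.} The plan is to reduce to a one-dimensional statement along each ray from $x^1$ and then invoke convexity of $h$ together with the sign constraint $h \le 0$. Let $V = \aff(\{x^i\}_{i=1}^k) \cap C$, which is a convex set, and note that $x^1 \in \relint(V)$ by hypothesis while $h(x^1) = 0$. The key observation is that $x^1$ being a relative interior point of $V$ means that for \emph{every} direction $v$ parallel to $\aff(\{x^i\})$, one can move from $x^1$ a positive amount in both the $+v$ and $-v$ directions while staying inside $V$ (hence inside $C$, where $h$ is defined and convex).

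\textbf{Main step.} Fix an arbitrary point $x \in V$ with $x \ne x^1$, and set $v = x - x^1$. Since $x^1 \in \relint(V)$, there exists $\delta > 0$ such that $x^1 - \delta v \in V \subseteq C$. Consider the three collinear points $x^1 - \delta v$, $x^1$, and $x$; here $x^1$ is a strict convex combination of the two outer points, namely $x^1 = \lambda (x^1 - \delta v) + (1-\lambda) x$ with $\lambda = \tfrac{1}{1+\delta} \in (0,1)$. Convexity of $h$ gives
\begin{equation*}
0 = h(x^1) \le \lambda\, h(x^1 - \delta v) + (1-\lambda)\, h(x).
\end{equation*}
Since $h \le 0$ everywhere on $C$, both $h(x^1 - \delta v) \le 0$ and $h(x) \le 0$, so the right-hand side is a nonnegative combination of nonpositive numbers, hence $\le 0$. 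Combined with the left inequality, the convex combination must equal $0$, which forces both $h(x^1 - \delta v) = 0$ and $h(x) = 0$ (each term is $\le 0$ and they sum to $0$ with positive weights). In particular $h(x) = 0$. Since $x \in V$ was arbitrary, $h$ vanishes on all of $V = \aff(\{x^i\}_{i=1}^k) \cap C$, which is the claim.

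\textbf{Remarks on obstacles.} I expect this to be essentially routine; the only point requiring a little care is the justification that all the intermediate points lie in $C$ so that convexity of $h$ applies — this is exactly what the relative-interior hypothesis buys us, since without it one could not guarantee that $x^1 - \delta v$ stays in the domain. One should also note the degenerate case $k=1$ or $x = x^1$, where the statement is trivial. No compactness or continuity of $h$ is needed; the argument is purely an exploitation of convexity plus the one-sided sign bound, which is the standard fact that a convex function attaining an interior maximum on a convex set (here the maximum value $0$, attained at the relative-interior point $x^1$) is constant on that set.
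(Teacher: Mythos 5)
Your proof is correct, but it takes a genuinely different route from the paper's. The paper reduces to the case $\aff\bra{\set{x^i}_{i=1}^k}=\mathbb{R}^d$ via an affine transformation, extends $h$ to an extended-real-valued convex function $\bar h$ equal to $+\infty$ off $C$, and then argues via subgradient calculus: since $x^1\in\Int\bra{\dom\bra{\bar h}}$ the subdifferential $\partial\bar h(x^1)$ is nonempty, any nonzero subgradient $u$ would force $\bar h(x^1+\varepsilon u)>0$ for small $\varepsilon$ (contradicting nonpositivity), so $\partial\bar h(x^1)=\set{0}$ and the subgradient inequality gives $h\geq 0$ on $C$, hence $h\equiv 0$. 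You instead argue pointwise and elementarily: for each $x\in V:=\aff\bra{\set{x^i}_{i=1}^k}\cap C$ you use the relative-interior hypothesis to back up from $x^1$ along $-\bra{x-x^1}$ by a small $\delta>0$ while staying in $V$, write $x^1$ as a strict convex combination of $x^1-\delta\bra{x-x^1}$ and $x$, and conclude from $0=h(x^1)\le\lambda h\bra{x^1-\delta v}+(1-\lambda)h(x)\le 0$ that both terms vanish. Both arguments are instances of the principle that a convex function attaining an interior maximum is constant; yours avoids the machinery of extended-valued functions and the nonemptiness/boundedness of the subdifferential at interior points of the domain, at the cost of invoking the standard fact that one can move backward from a relative-interior point in any direction parallel to the affine hull (which you correctly identify as the one point needing care, and which holds here since $v=x-x^1$ with $x\in V$). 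Your version is, if anything, more self-contained; the paper's is a one-line appeal to standard convex analysis once the setup is in place.
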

\begin{proof}[of Lemma \ref{technicalLemma}]
Let $C\subseteq \mathbb{R}^d$ be the convex set such that $\dim\bra{C}=d'$ and $\set{B_z}_{z\in C}$ be the closed convex family such that $S=\bigcup_{z\in C\cap \mathbb{Z}^d} \proj_x\bra{B_z}$. Since $S$ is rational-MICP representable, $C\cap \mathbb{Z}^d$ is either finite or rationally unbounded. In the first case $S$ is finite so we can assume that $C\cap \mathbb{Z}^d$ is rationally unbounded. 

Since $n=1$ and convex subsets of the real line are intervals, we may define $f,g:C\to \mathbb{R}$ such that $f(z)$ and $g(z)$ represent the lower and upper endpoints of the intervals $\proj_x\bra{B_z}$ for all $z\in C$. Because $\set{\proj_x\bra{B_z}}_{z\in C}$ is a convex family we  have that  $h:=f-g:C \rightarrow \mathbb{R}$ is a convex function and $h(z)\leq 0$ for all $z\in C$. Furthermore, since $S\subseteq \mathbb{N}$ we have $h(z)=0$ for all $z\in C\cap \mathbb{Z}^d$. 

Let $I\subseteq C\cap \mathbb{Z}^d$ be the finite set such that $C'=\conv\bra{\bra{C\cap \mathbb{Z}^d}\setminus I}$ is rationally unbounded. By letting $T_0:=\set{\proj_x\bra{B_z}}_{z\in I}\subset \mathbb{N}$ be the finite set in the statement of Lemma \ref{technicalLemma} and noting that $C'\subseteq C$ we may redefine $C$ to be equal to $C'$. Let $r\in \mathbb{Z}^d$ be the direction from Definition~\ref{rationaldefc} such that $l_z:=\set{z+\lambda r\,:\,\lambda\geq 0} \subseteq  C$. Because $\abs{l_z\cap \mathbb{Z}}=\infty$ and $h(z')=0$ for all $z'\in l_z\cap \mathbb{Z}$ we have that $h(z')=0$ for all $z'\in l_z$ by Lemma~\ref{linearlemma}. Hence for all $z'\in l_z$ we have $\proj_x\bra{B_{z'}}=\set{f(z')}=\set{g(z')}$. Given that $\proj_x\bra{B_{z'}} \in \mathbb{N}$ for $z' \in l_z$ and being a convex and a concave function is equivalent to being an affine function, we further have that $\set{\proj_x\bra{B_z}}_{z\in l_z}$ being a convex family implies that there exist $\alpha_z \in \mathbb{Z}^d$ and $\beta_z \in \mathbb{Z}$ such that $f(z')=g(z')=\alpha_z \cdot z'+\beta_z$ for all $z'\in l_z$. Then  $\set{\proj_x\bra{B_{z'}}}_{z'\in l_z\cap \mathbb{Z}^d}=\set{a_z m +b_z\,:\, m\in \mathbb{N}_0 }$, where $a_z=(\alpha_z \cdot r)/\gcd\bra{r_1,\ldots,r_d}$ and $b_z=\beta_z$. If $\alpha_z \cdot r > 0$ this corresponds to an infinite arithmetic progression and if $\alpha_z \cdot r = 0$ it corresponds to a single point.

Let $\set{T_i}_{i=1}^{2^{d}}$ be such that $C\cap \mathbb{Z}^d=\bigcup_{i=1}^{2^{d}}T_i$ and  $z_j\equiv z'_j \mod 2$ for all $j\in \sidx{d}$,  $i\in \sidx{2^{d}}$ and $z,z'\in T_i$. For fixed $i\in \sidx{2^{d}}$ we have  $\frac{z+z'}{2} \in C\cap \mathbb{Z}^d$ and  $l_{\frac{z+z'}{2}}\subset C$ for any $z,z'\in T_i$. Then $P:=\conv\bra{\set{l_z,l_{\frac{z+z'}{2}},l_{z'}}}\subset C$ and $h\bra{\tilde{z}}=0$ for all $\tilde{z}\in P\cap \mathbb{Z}^d$. Then, by Lemma~\ref{linearlemma} $h\bra{\tilde{z}}=0$ for all $\tilde{z}\in P$. By the same argument in the previous paragraph there exist  $\alpha_P \in \mathbb{Z}^d$ and $\beta_P \in \mathbb{Z}$ such that $f(\tilde{z})=g(\tilde{z})=\alpha_P \cdot \tilde{z}+\beta_P$ for all $\tilde{z}\in P$. In particular, $\alpha_P \cdot \tilde{z}+\beta_P=\alpha_z \cdot \tilde{z}+\beta_z$ for all $\tilde{z}\in l_z$ and $\alpha_P \cdot \tilde{z}+\beta_P=\alpha_{z'} \cdot \tilde{z}+\beta_{z'}$ for all $\tilde{z}\in l_{z'}$. Hence $\alpha_{z} \cdot r=\alpha_{z'} \cdot r$ and then $s_i:=a_z=a_{z'}$. Then $\set{\proj_x\bra{B_{\tilde{z}}}}_{\tilde{z}\in l_z\cap \mathbb{Z}^d}=\set{s_i m +b_z\,:\, m\in \mathbb{N}_0 }$ for all $z\in T_i$. Unfixing $i$ we may define $S_0$ from the statement of Lemma~\ref{technicalLemma} to be
\[S_0:=\bigcup_{i\in \sidx{2^{d}}: s_i>0} \bigcup_{z\in T_i} \set{s_i m +b_z\,:\, m\in \mathbb{N}_0 }\subseteq \bigcup_{i\in \sidx{2^{d}}: s_i>0} \bigcup_{b=0}^{s_i-1} \set{s_i m +b\,:\, m\in \mathbb{N}_0 }.\] The last inclusion implies   $S_0$ is a finite union of infinite arithmetic progressions. It then only remains to consider sets $\set{\proj_x\bra{B_{\tilde{z}}}}_{\tilde{z}\in l_z\cap \mathbb{Z}^d}$ for $z\in T_i$ and $i\in \sidx{2^{d}}$ such that $s_i=0$ ($s_i\geq 0$ because $S\subseteq \mathbb{N}$). Say we have $k$ such $i$'s (WLOG $i=1,\ldots,k$) and we will show that for every such $i$, $S_i:=\bigcup_{z' \in T_i} \proj_x\bra{B_{{z'}}^i}$ is rational-MICP representable with MICP-dimension at most $d'-1$. Because $S=T_0 \cup S_0 \cup \bigcup_{i=1}^{k} S_i$ the proof will be complete.

For a fixed $i\in \sidx{2^{d}}$ such that $s_i=0$, let  $t^i \in T_i$ so that $T_i=\bra{t^i+2 \mathbb{Z}^d}\cap C$. Let $\set{v_i}_{i=1}^{d}$ a rational orthogonal basis of $\mathbb{R}^d$ such that $r=v_{d}$ and $\set{v_i}_{i=d-d'+1}^{d}$ is an orthonormal basis of the linear subspace $L(C)$ parallel to $\aff(C)$ (i.e. $L(C):=\aff\bra{C-z}$ for any $z\in C$).
 Let $A\in \mathbb{R}^{d\times d}$ such that for $i\leq d-1$ the $i$-th row of $A$ is $v_i^T$ and the $d$-th row of $A$ has all components equal to zero. Also, let ${A}_{1:d-1}$ be the restriction of $A$ to the first $d-1$ rows and let $H\in \mathbb{R}^{\bra{d-1}\times\bra{d-1}}$ and $U\in \mathbb{R}^{d\times d}$ be a unimodular matrix such that
 \begin{equation}\label{HNF}
 {A}_{1:d-1}=[H|0]U
 \end{equation}
 (e.g Hermite normal form). Finally, let  $l^{\pm}_z:=\set{z+\lambda r\,:\,\lambda\in \mathbb{R}}\cap C$ and
 $C_i= U^{-1} \begin{bmatrix}H^{-1}&0\\0&1\end{bmatrix} A (C-t^i)/2= U^{-1} \begin{bmatrix}I&0\\0&0\end{bmatrix} U (C-t^i)/2$. 

 We claim that $\bigcup_{w \in T_i } l^{\pm}_{w}=\bigcup_{w \in C_i \cap \mathbb{Z}^d } l^{\pm}_{2w+t^i}$. 
 Indeed,   $z'\in T_i$ if and only if there exists $z''\in \mathbb{Z}^d\cap \bra{C-t^i}/2$ such that $z'=t^i+ 2z''$. Then $ A z''\in  A (C-t^i)/2$ and $l^{\pm}_{z'}=l^{\pm}_{t^i+2z''}$. But we know $A=\begin{bmatrix}H&0\\0&0\end{bmatrix} U$ which gives after some algebra $  U z'' \in \begin{bmatrix}I&0\\0&0\end{bmatrix} U (C-t^i)/2 +\begin{bmatrix}0&0\\0&1\end{bmatrix} \mathbb{R}^d$. However, because $U$ is unimodular and $z''\in \mathbb{Z}^d$ we have  $U z''\in \mathbb{Z}^d$ we can replace $\mathbb{R}^d$ by $\mathbb{Z}^d$ and there exist $y \in \mathbb{Z}$ and $z \in C_i\cap \mathbb{Z}^d$ such that  $z''=z+U^{-1} \begin{bmatrix}0\\y\end{bmatrix}$. From \eqref{HNF}, orthogonality of  $\set{v_i}_{i=1}^{d}$ and unimodularity of $U$ we have $U^{-1} e_d =\alpha r$ for some $\alpha\in \mathbb{Z}$ and hence  $z''=z+y \alpha r$. Then $l^{\pm}_{z'}=l^{\pm}_{t^i+2z''}=l^{\pm}_{t+2z}\subseteq \bigcup_{w \in C_i \cap \mathbb{Z}^d } l^{\pm}_{2w+t^i}$.

 For the other direction, if $z\in C_i\cap \mathbb{Z}^d$ then  there exist $z''\in \bra{C-t^i}/2$ such that $U z = \begin{bmatrix}I&0\\0&0\end{bmatrix} U z''= U z''- e^d [U z'']_d= U \bra{z''-\alpha r [U z'']_d}$ for  the $\alpha\in \mathbb{Z}$ such that $U^{-1} e_d =\alpha r$. Let $z'''= z''-\alpha r [U z'']_d$ and $\mu=\alpha  [U z'']_d$. Then $z'''\in \mathbb{Z}^d$ and $z'''+\mu r\in \bra{C-t^i}/2$ and hence there exist $k\geq \mu$ such that $\bar{z}=z'''+k r\in \bra{C-t^i}/2\cap \mathbb{Z}^d$ (because without loss of generality we may replace $C$ with $\conv\bra{C\cap \mathbb{Z}^d}$ so that $r$ is a recession direction of $C$). Finally, $z= z''-\alpha r [U z'']_d=\bar{z}-k r$. Then $z':=t^i+2(z+k r)$ is such that  $z' \in T_i$ and $l^{\pm}_{t^i+2z}=l^{\pm}_{z'} \subseteq \bigcup_{w \in T_i } l^{\pm}_{w}$ as claimed.

Now  let   $l^{\pm}\bra{z,\lambda}:=z+\lambda r$ and $\Lambda:=\set{\lambda\in \mathbb{R}\,:\,l^{\pm}\bra{z,\lambda}\in C }$ so that $l^{\pm}_z=\set{z+\lambda r\,:\,\lambda\in \mathbb{R}}\cap C=\bigcup_{\lambda\in \Lambda} l^{\pm}\bra{z,\lambda}$ and $\Lambda$ is a convex set in $\mathbb{R}$. Furthermore, for each $z\in C_i$ let 
$\tilde{B}_z^i=\bigcup_{\lambda \in \Lambda} \left(B_{l^{\pm}\bra{t^i+2z,\lambda}} \times \{ \lambda \}\right)$. We can check that $\bra{\tilde{B}_z^i}_{z\in C_i}$ is a closed convex family. Both convexity of $\tilde{B}_z^i$ and the convex family property hold because $\bra{{B}_z}_{z\in C}$ is a convex family, $l^{\pm}_z$ is convex and $l^{\pm}\bra{z,\lambda}$ is affine. To see that the family is closed, suppose we have a sequence $(x^m,y^m,\lambda^m,z^m)$ converging to $(x,y,\lambda,z)$ with $(x^m,y^m,\lambda^m) \in \tilde{B}_{z^m}^i$. Then $(x^m,y^m)\in B_{l^{\pm}\bra{t^i+2z^m,\lambda^m}}$ and hence by closedness of $\bra{{B}_z}_{z\in C}$ and continuity of $l^{\pm}\bra{z,\lambda}$ we have $(x,y)\in B_{l^{\pm}\bra{t^i+2z,\lambda}}$ and hence $(x,y,\lambda) \in \tilde{B}_{z}^i$. Finally,  for each $z\in C_i\cap \mathbb{Z}^d$ we have 
\[\proj_x\bra{\tilde{B}_{{z}}^i}=\bigcup_{\lambda\in \Lambda} \proj_x\bra{B_{l^{\pm}\bra{t^i+2z,\lambda}}} =\bigcup_{\tilde{z}\in l^{\pm}_{t^i+2z}}\proj_x\bra{B_{\tilde{z}}}=\bigcup_{\tilde{z}\in l^{\pm}_{z'}}\proj_x\bra{B_{\tilde{z}}}\] for some $z' \in T_i$ where we set $B_y=\emptyset$ for all $y \not \in C$ and we have used  $ \bigcup_{w \in C_i \cap \mathbb{Z}^d } l^{\pm}_{2w+t^i} \subset \bigcup_{w \in T_i } l^{\pm}_{w}$.  
However, because $s_i=0$ we have that $\proj_x\bra{B_{\tilde{z}}}=\{b_{z'} \}$ for all  $\tilde{z}\in l^{\pm}_{z'} \cap C$. Hence for all $z \in C_i \cap \mathbb{Z}^d$, $\proj_x\bra{\tilde{B}_{{z}}^i}=\{b_{z'} \}=\proj_x\bra{B_{{z'}}}$ for some $z' \in T_i$. But for any $z' \in T_i$ since $\bigcup_{w \in T_i } l^{\pm}_{w} \subset \bigcup_{w \in C_i \cap \mathbb{Z}^d } l^{\pm}_{2w+t^i}$ it holds also  $\proj_x\bra{B_{{z'}}}=\proj_x\bra{\tilde{B}_{{z}}^i}$ for some $z \in C_i \cap \mathbb{Z}^d$. Hence $S_i=\bigcup_{z' \in T_i} \proj_x\bra{B_{{z'}}}=\bigcup_{z\in C_i\cap \mathbb{Z}^d}\proj_x\bra{\tilde{B}_z^i}$.
The result finally follows since $\tilde{B}_z^i$ is a closed convex family, by noting that $\dim\bra{C_i}\leq d'-1$, $C_i$ is rationally unbounded by definition of $C$ as a rational map of $C$ and hence $S_i$ is rational-MICP representable with MICP-dimension at most $d'-1$.

\end{proof}

\bibliographystyle{splncs03}
\bibliography{refs}

\pagebreak
\section*{Appendix}
\appendix
\section{Additional Proofs}

\subsection{Proof of Proposition~\ref{balasform}}

Note that the constraints define a convex set because the conic hull of a convex set is convex, and $||x_i||_2^2 \le z_it$ is a form of the rotated second-order cone, which is also convex.
Any feasible assignment of the integer vector $z$ has at most one nonzero component. Without loss of generality we may take this to be the first component, so $z_1 = 1$. Since $t$ is unrestricted in the positive direction, the constraint $||x_1||_2^2 \le t$ imposes no restrictions on the vector $x_1$ and $x_1 \in T_1$ iff there exists $y_1 \in \mathbb{R}^p$ such that $(x_1,y_1,1) \in \hat T_1$. For $i > 1$, the constraint $||x_i||_2^2 \le 0$ implies $x_i = 0$, and this is feasible because $(0,0,0) \in \hat T_i$ given $\hat T_i$ is nonempty by assumption. 

\subsection{Proof of Proposition~\ref{boundedprop}}
This argument is an extension of Theorem 11.6 of~\cite{VielmaSurvey}.
Suppose $M = C + K$ where $C$ is a compact convex set and $K$ is a polyhedral cone generated by rational rays $(r_x^1,r_y^1,r_z^1),(r_x^2,r_y^2,r_z^2),\ldots,(r_x^t,r_y^t,r_z^t)$. (We may assume without loss of generality that these rays furthermore have integer components). We will prove that there exist sets $S^1,\ldots,S^k$ such that
\begin{equation}\label{eq:finiteS}
S=\proj_x\bra{M\cap \bra{\mathbb{R}^{n+p} \times \mathbb{Z}^d}} = \cup_{i=1}^k S^{i} + \operatorname{intcone}(r_x^1,r_x^2,\ldots,r_x^t),
\end{equation}
where each $S^i$ is a projection of a closed convex set.

For any $(x^*,y^*,z^*) \in M$ there exist a finite (via Carath\'{e}odory) set of extreme points $(x^1,y^1,z^1),(x^2,y^2,z^2),\ldots,(x^w,y^w,z^w)$ of $C$ and nonnegative multipliers $\lambda$, $\gamma$ with $\sum_{i=1}^w \lambda_i = 1$ such that
\begin{equation}
(x^*,y^*,z^*) = \sum_{i=1}^w \lambda_i (x^i,y^i,z^i) + \sum_{j=1}^t \gamma_j (r_x^j,r_y^j,r_z^j).
\end{equation}
Define
\begin{equation}
(\hat x, \hat y, \hat z) = \sum_{i=1}^w \lambda_i (x^i,y^i,z^i) + \sum_{j=1}^t (\gamma_j - \lfloor \gamma_j \rfloor) (r_x^j,r_y^j,r_z^j)
\end{equation}
and
\begin{equation}
(x^\infty, y^\infty, z^\infty) = \sum_{j=1}^t \lfloor \gamma_j \rfloor (r_x^j,r_y^j,r_z^j)
\end{equation}
so that $(x^*,y^*,z^*) = (\hat x, \hat y, \hat z) + (x^\infty, y^\infty, z^\infty)$.

      Note that $(x^\infty,y^\infty,z^\infty) \in \operatorname{intcone}( (r_x^{1}, r_y^1, r_z^{1}), \ldots, (r_x^{t},r_y^t,r_z^{t})) =: M^\infty$ and $\hat z = z^* - z^\infty \in \mathbb{Z}^d$, so $(\hat x, \hat y, \hat z)$ belongs to a bounded set
      \begin{equation}
          \hat M = (C + B) \cap (\mathbb{R}^{n+p} \times \mathbb{Z}^d),
      \end{equation}
      where $B = \{ \sum_{j=1}^t \gamma_j (r_x^j,r_y^j,r_z^j) | 0 \le \gamma \le 1 \}$.
          Since this decomposition holds for any points $(x^*,y^*,z^*) \in M$, it follows that $M \subseteq \hat M +  M^\infty$.
          Since $\hat M$ is bounded, $\hat M$ is a finite union of bounded convex sets. Also $\hat M + M^\infty \subseteq M$ is easy to show, so we've demonstrated $M = \hat M + M^\infty$. The statement~\eqref{eq:finiteS} follows from projection of $M$ onto the $x$ variables.

\subsection{Proof of Lemma~\ref{MILPlemma}}

\begin{proof}
Assume $S$ is rational MICP representable. Then there exists finitely many points $b_1,\ldots,b_n$ and $z_1,\ldots,z_r$ such that $S=\{b_1,\ldots,b_n\}+\mathrm{intcone}(z_1,\ldots,z_r)$. If $r = 0$, then $S$ is a finite union of points and we are done. So we can assume $r>0$ and without loss of generality that $n=1$. Note that $\mathrm{intcone}(z_1,\ldots,z_r)=g \cdot I$ for $g:=\mathrm{gcd}(z_1,\ldots,z_n)$ and $I \subset  \mathbb{N}$ which is closed under addition and generated by finitely many coprime numbers. By Schur's Theorem~\cite{Bressoud}, $I$ consists of all eventually the non-negative natural numbers, so suppose $I:= \{\alpha_1,\ldots,\alpha_m \} \bigcup \{ x \in \mathbb{N} | x \geq \alpha_m \}$ for some $\alpha_1<\alpha_2<\ldots <\alpha_m \in \mathbb{N}$. So $S$ takes the form 
\begin{equation}\label{eq:Schur}
S= \{b_1+g\alpha_1,b_1+g \alpha_2, \ldots,b_1+g\alpha_m \} \bigcup \{ b_1+gx \in \mathbb{N} | x \geq \alpha_m \}.
\end{equation} But then if $J$ is the finite set defined by $J:=\{ y \leq 2 \prod_{i=1}^{m} \alpha_i | b_1+g y \in S\}$ we claim that $S=\bigcup_{ y \in J} \{ b_1+gy + (g \prod_{i=1}^{m} \alpha_i) K| K \in \mathbb{N} \}$.

It is easy to see that $\bigcup_{ y \in J} \{ b_1+gy + (g \prod_{i=1}^{m} \alpha_i) K| K \in \mathbb{N} \} \subseteq S$.For the other inclusion, take any $x$ such that $b_1+gx \in S$. Take the largest $M \in \mathbb{N}=\{0,1,2,\ldots \}$ such that $b_1+g(x-M \prod_{i=1}^m \alpha_i) \in S$. $M< \infty$ as $S \subseteq \mathbb{N}$. Now we claim $x-M \prod_{i=1}^m \alpha_i \in J$. Indeed, we have $b_1+g(x-M \prod_{i=1}^m \alpha_i )\in S$ and if $x-M \prod_{i=1}^m \alpha_i>2 \prod_{i=1}^m \alpha_i$ then $x-(M+1)\prod_{i=1}^m \alpha_i>\prod_{i=1}^m \alpha_i>\alpha_m$ and therefore $b_1+g(x-(M+1)\prod_{i=1}^m \alpha_i) \in S$ by (\ref{eq:Schur}) a contradiction with the definition of $M$. Hence, for some $y \in J$, $x= \prod_{i=1}^m \alpha_i M +y$ or $$b_1+gx=b_1+gy+g \prod_{i=1}^m \alpha_i M \in \bigcup_{ y \in J} \{ b_1+gy + (g \prod_{i=1}^{m} \alpha_i) K| K \in \mathbb{N} \}$$ as wanted.

The other direction is immediate.
\end{proof}
\subsection{Proof of Lemma~\ref{lem:finiteunion}}

\begin{proof}
Let $a_i,b_i \in \mathbb{N}$, $i \in \sidx{m}$ and suppose $T:=\bigcup_{i=1}^{m} \{a_i+b_ix|x \in \mathbb{N} \}$. Then to prove the Lemma we will prove that that for the finite set $J$ defined by $J:=\{ y \in \mathbb{N}, y \leq 2( \prod_{i=1}^m b_i+ \prod_{i=1}^{m} a_i )  | y \in T \}$ it holds $$T = \bigcup_{y \in J} \{ y+(\prod_{i=1}^{m} b_i)K | K \in \mathbb{N} \}.$$

It is easy to see that $\bigcup_{y \in J} \{ y+(\prod_{i=1}^{m} b_i)K | K \in \mathbb{N} \} \subseteq T$.

For the other inclusion, take any $i \in \sidx{m}$ and $x \in \mathbb{N}$. We need to show $a_i+b_i x \in \bigcup_{y \in J} \{ y+(\prod_{i=1}^{m} b_i)K | K \in \mathbb{N} \}$. Wlog $i=1$. Take the maximum $M \in \mathbb{N}$ such that $a_1+b_1(x-M \prod_{i=2}^{m}b_i) \in T$. We claim $a_1+b_1(x-M \prod_{i=2}^{m}b_i) \in J$. Indeed, if $a_1+b_1(x-M \prod_{i=2}^{m}b_i)>2( \prod_{i=1}^m b_i+ \prod_{i=1}^{m} a_i )$ then $a_1+b_1(x-(M+1) \prod_{i=2}^{m}b_i)>( \prod_{i=1}^m b_i+ \prod_{i=1}^{m} a_i )>a_1$ which in particular implies that $x-(M+1)\prod_{i=2}^{m}b_i >0$ and hence $a_1+b_1(x-(M+1)\prod_{i=2}^{m}b_i) \in T$, a contradiction with the definition of $T$. Hence $y:=a_1+b_1(x-M \prod_{i=2}^{m}b_i) \in J$ and hence for some $y \in J$, $a_1+b_1 x=y+M \prod_{i=1}^m b_i$ as needed.
\end{proof}

\subsection{Proof of Lemma~\ref{linearlemma}}

After an affine transformation we may assume without loss of generality that $\aff\bra{\set{x^i}_{i=1}^k}=\mathbb{R}^d$. Let  $\bar{h}:\mathbb{R}^d\to \mathbb{R}\cap\set{\infty}$ so that $\bar{h}(x)=h(x)$ for all $x\in  C$ and $\bar{h}(x)=\infty$ otherwise. We have that $x^1\in \Int\bra{\dom\bra{\bar{h}}}=\Int\bra{C}$ and hence $\partial \bar{h}\bra{x^1}$ is nonempty and bounded~\cite{rockafellar1997convex}. If there exist $u\in \partial \bar{h}\bra{x^1}\setminus \set{0}$ then for sufficiently small $\varepsilon>0$ we have $x^1+\varepsilon u\in \Int\bra{C}$ and $0\geq \bar{h}\bra{x^1+\varepsilon u}\geq \bar{h}\bra{x^1}+\varepsilon ||u||_2>0$, which is a contradiction. Hence $ \partial \bar{h}\bra{x^1}=\set{0}$ so $h\bra{x}=\bar{h}\bra{x}\geq  \bar{h}\bra{x^1}=0$ for all $x\in C$.

\end{document}